\newcommand{\Q}{\mathbb{Q}}
\newcommand{\PPP}{\mathbb{P}}
\renewcommand{\to}{\longrightarrow}
\newtheorem{Theorem}{Theorem}[section]
\newtheorem{Definition}[Theorem]{Definition}
\newtheorem{Lemma}[Theorem]{Lemma}
\newtheorem{Proposition}[Theorem]{Proposition}
\newtheorem{Corollary}[Theorem]{Corollary}
\newtheorem{Remark}[Theorem]{Remark}
\newtheorem{Example}[Theorem]{Example}
\newtheorem{Conjecture}[Theorem]{Conjecture}
\DeclareMathOperator{\Der}{Der}
\DeclareMathOperator{\pdeg}{pdeg}
\DeclareMathOperator{\codim}{codim}
\def \X(#1){\{x_1,\dots, x_{#1}\}}
\newcommand \lcm  {{\rm lcm}}
\DeclareMathOperator{\LT}{LT}
\DeclareMathOperator{\LM}{LM}
\DeclareMathOperator{\LC}{LC}
\DeclareMathOperator{\rk}{rk}
\DeclareMathOperator{\Mat}{Mat}
\DeclareMathOperator{\diag}{diag}
\DeclareMathOperator{\Supp}{Supp}
\newcommand{\A}{\mathcal{A}}
\newcommand \ideal[1] {\langle #1 \rangle}
\begin{document}

\title{Combinatorially equivalent hyperplane arrangements}

\begin{abstract} 
We study the combinatorics of hyperplane arrangements over arbitrary fields. Specifically, we determine in which situation an arrangement and its reduction modulo a prime number have isomorphic lattices via the use of minimal strong $\sigma$-Gr\"obner bases. Moreover, we prove that the Terao's conjecture over finite fields implies the conjecture over the rationals.
\end{abstract}

\author{Elisa Palezzato}
\address{Elisa Palezzato, Department of Mathematics, Hokkaido University, Kita 10, Nishi 8, Kita-Ku, Sapporo 060-0810, Japan.}
\email{palezzato@math.sci.hokudai.ac.jp}
\author{Michele Torielli}
\address{Michele Torielli, Department of Mathematics, GI-CoRE GSB, Hokkaido University, Kita 10, Nishi 8, Kita-Ku, Sapporo 060-0810, Japan.}
\email{torielli@math.sci.hokudai.ac.jp}


\date{\today}
\maketitle


\section{Introduction}

Let $V$ be a vector space of dimension $l$ over a field $K$. Fix a system of coordinates $(x_1,\dots, x_l)$ of $V^\ast$. 
We denote by $S = S(V^\ast) = K[x_1,\dots, x_l]$ the symmetric algebra of $V^\ast$. 
A hyperplane arrangement $\A = \{H_1, \dots, H_n\}$ is a finite collection of hyperplanes in $V$. For a thorough treatment of the theory of hyperplane arrangements and recent developments, see \cite{orlterao}, \cite{palezzato2019lefschetz}, \cite{guo2018falk} and \cite{palezzato2020localiz}.

The \textit{lattice of intersections} $L(\A)$ is a fundamental combinatorial invariant of an arrangement $\A$. In fact one of the most studied topics
in the theory of arrangements is to identify which topological and algebraic invariants of an arrangement are determined by its lattice of intersections.

To pursue this type of questions, Athanasiadis (\cite{athanasiadis1996characteristic}, \cite{athanasiadis1999extended} and \cite{athanasiadis2004generalized}), inspired by \cite{crapo1970foundations} and \cite{blass1998characteristic}, initiated and systematically applied the ``finite field method'', i.e. the study of the combinatorics of arrangements and their reduction modulo prime numbers. See also \cite{bjorner1997subspace} for related work.  After its introduction, this method has been used by several authors (\cite{kamiya2006arrangements}, \cite{kamiya2008periodicity}, \cite{ardila2007computing} and \cite{palezzato2018free}) to solve similar problems.
The purpose of this paper is to study the combinatorics of arrangements over arbitrary fields and determine in which situation an arrangement and its reduction modulo a prime have isomorphic lattices.

The paper is organized as follows. In Section 2, we recall the basic notions on hyperplane arrangements. In Section 3, we describe how to characterize when two arrangements are combinatorially equivalent. In Section 4, we use the results of Section 3 to describe the primes $p$ for which $\A$ and $\A_p$ are combinatorially equivalent. In Section 5, we show that the knowledge of Terao's conjecture in finite characteristic implies the conjecture over the rationals. In Section 6, we describe a method to compute \textit{good primes} via minimal strong $\sigma$-Gr\"obner bases. In Section 7, we show that computing the \textit{good} and \textit{$(\sigma,l)$-lucky primes} for an arrangement is equivalent to compute all the primes that divide its \textit{$\lcm$-period} (as defined in \cite{kamiya2008periodicity}).

\section{Preliminaries}

Let $K$ be a field. A finite set of affine hyperplanes $\A =\{H_1, \dots, H_n\}$ in $K^l$ is called a \textbf{hyperplane arrangement}. 
For each hyperplane $H_i$ we fix a polynomial $\alpha_i\in S= K[x_1,\dots, x_l]$ such that $H_i = \alpha_i^{-1}(0)$, 
and let $Q(\A)=\prod_{i=1}^n\alpha_i$. An arrangement $\A$ is called \textbf{central} if each $H_i$ contains the origin of $K^l$. 
In this case, each $\alpha_i$ is a linear homogeneous polynomial, and hence $Q(\A)$ is homogeneous of degree $n$. 

Define the \textbf{lattice of intersections} of $\A$ by
$$L(\A)=\{\bigcap_{H\in\mathcal{B}}H \mid \mathcal{B}\subseteq\A\},$$
where if $\mathcal{B}=\emptyset$, we identify $\bigcap_{H\in\mathcal{B}}H$ with $K^l$.
 We endow $L(\A)$ with a partial order defined by $X\le Y$ if and only if $Y\subseteq X$, for all $X,Y\in L(\A)$. 
Note that this is the reverse inclusion. Define a rank function on $L(\A)$ by $\rk(X)=\codim(X)$. 
Moreover, we define $\rk(\A)=\codim(\bigcap_{H\in\mathcal{A}}H)$.
$L(\A)$ plays a fundamental role in the study of hyperplane arrangements, in fact it determines the combinatorics of the arrangement.
Let $$L^k(\A)=\{X\in L(\A)~|~\rk(X)=k\},$$ we call $\A$ \textbf{essential} if $L^l(\A)\ne\emptyset$.

Let $\mu\colon L(\A)\to\mathbb{Z}$ be the \textbf{M\"obius function} of $L(\A)$ defined by
$$\mu(X)=
\begin{cases}
      1 & \text{for } X=K^l,\\
      -\sum_{Y<X}\mu(Y) & \text{if } X>K^l.
\end{cases}$$

The \textbf{characteristic polynomial} of $\A$ is $$\chi(\A,t) = \sum_{X\in L(\A)}\mu(X)t^{\dim(X)}.$$ 

Given $\A =\{H_1, \dots, H_n\}$ an arrangement in $K^l$, the operation of \textbf{coning} allows to transform $\A$ into a central arrangement 
$c\A=\{\tilde{H}_1, \dots, \tilde{H}_{n+1}\}$ in $K^{l+1}$. The hyperplane $\tilde{H}_{n+1}$ corresponds to the hyperplane at infinity $H_\infty$ of $\A$.
Moreover, $\bar{\A}=\{\bar{H}_1, \dots, \bar{H}_{n+1}\}$ denotes the projectivization of $c\A$, which is an arrangement induced by $c\A$ in the projective space $K\PPP^l$.
We will say that $\bar{\A}$ is \textbf{essential} if $\bigcap_{i=1}^{n+1}\bar{H}_i=\emptyset$.

Associated to each hyperplane arrangement $\A$, it can be naturally defined its \textbf{Tutte polynomial}
$$T_\A(x,y)=\sum_{\substack{\mathcal{B}\subseteq\A \\ \mathcal{B} \text{ central }}}(x-1)^{\rk(\A)-\rk(\mathcal{B})}(y-1)^{|\mathcal{B}|-\rk(\mathcal{B})}.$$
As shown in \cite{ardila2007computing}, it turns out that the Tutte polynomial and the characteristic polynomial are related by
$$\chi(\A,t)=(-1)^{\rk(\A)}t^{l-\rk(\A)}T_{\A}(1-t,0).$$
It is sometimes useful to consider a simple transformation of the Tutte polynomial.
The \textbf{coboundary polynomial} of $\A$ is
$$\overline{\chi}_\A(x,y)=\sum_{\substack{\mathcal{B}\subseteq\A \\ \mathcal{B} \text{ central }}}x^{\rk(\A)-\rk(\mathcal{B})}(y-1)^{|\mathcal{B}|}.$$
It is easy to check that
$$\overline{\chi}_\A(x,y)=(y-1)^{\rk(\A)}T_\A\big(\frac{x+y-1}{y-1},y\big),$$
and
$$T_\A(x,y)=\frac{1}{(y-1)^{\rk(\A)}}\overline{\chi}_\A((x-1)(y-1),y).$$

\section{Combinatorial equivalence}
The results in this section are a generalization of certain ones from \cite{terao2002moduli}.
Fix a pair $(l, n)$ with $l\ge1$ and $n\ge0$. Let $A_n(K^l)$ be the set of affine arrangements of $n$ distinct linearly ordered hyperplanes in $K^l$. 
In other words, each element $\A$ of $A_n(K^l)$ is a collection $\A =\{H_1, \dots, H_n\}$, where $H_1, \dots, H_n$ are distinct affine hyperplanes in $K^l$.

\begin{Definition} Given $\A\in A_n(K^l)$, define
$$\mathcal{I}(\bar{\A})=\{(i_1,\dots, i_{l+1})\in[n+1]^{l+1}_<~|~\bar{H}_{i_1}\cap\cdots\cap\bar{H}_{i_{l+1}}\ne\emptyset\},$$
where $[n+1]=\{1,\dots, n+1\}$ and $[n+1]^{l+1}_<=\{(i_1,\dots, i_{l+1})\in[n+1]^{l+1}~|~i_1<\cdots< i_{l+1}\}$.
\end{Definition}

The space $\mathcal{I}(\bar{\A})$ allows us to check if $\A$ and $\bar{\A}$ are essential.

\begin{Lemma}\label{lemm:allessentialcheck} Given $\A\in A_n(K^l)$, the following conditions are equivalent
\begin{enumerate}
\item $\A$ is essential.
\item $\bar{\A}$ is essential.
\item $\mathcal{I}(\bar{\A})\ne[n+1]^{l+1}_<$.  
\end{enumerate}
\end{Lemma}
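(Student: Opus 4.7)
My plan is to prove (1)$\Leftrightarrow$(2) via a direct rank computation on the cone $c\A$, and (2)$\Leftrightarrow$(3) via a greedy dimension-drop argument in $K\PPP^l$.

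For (1)$\Leftrightarrow$(2), I would begin by observing that $\bar{\A}$ is essential exactly when $\bigcap_{i=1}^{n+1}\tilde{H}_i=\{0\}$ in $K^{l+1}$, i.e., $\rk(c\A)=l+1$. Since $\tilde{H}_{n+1}$ is the hyperplane at infinity with normal vector $e_{l+1}$ while each $\tilde{H}_i$ for $i\le n$ has normal $(a_{i1},\dots,a_{il},b_i)$, a row operation using the $e_{l+1}$-row to kill the last entry of every other normal yields $\rk(c\A)=\rk(\A)+1$. Hence $\rk(c\A)=l+1$ if and only if $\rk(\A)=l$, which is precisely the equivalence of (1) and (2).

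For the equivalence (2)$\Leftrightarrow$(3), the implication (3)$\Rightarrow$(2) is immediate: an empty intersection of $l+1$ of the $\bar{H}_i$ forces $\bigcap_{i=1}^{n+1}\bar{H}_i=\emptyset$. For (2)$\Rightarrow$(3) I would run the following greedy procedure. Set $Y_0=K\PPP^l$, $j_0=0$, and at step $k\ge 1$ let $j_k$ be the smallest index greater than $j_{k-1}$ with $Y_{k-1}\cap\bar{H}_{j_k}\subsetneq Y_{k-1}$, putting $Y_k=Y_{k-1}\cap\bar{H}_{j_k}$. Each strict drop reduces $\dim Y_k$ by exactly one, so the process terminates in at most $l+1$ steps. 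If it ever halts with $Y_{k-1}\ne\emptyset$ and no valid $j_k$, then every index $i>j_{k-1}$ satisfies $Y_{k-1}\subseteq\bar{H}_i$ by the stopping criterion, and every index skipped at a previous step $t$ satisfies $Y_{t-1}\subseteq\bar{H}_i$ and hence $Y_{k-1}\subseteq\bar{H}_i$; therefore $Y_{k-1}\subseteq\bigcap_{i=1}^{n+1}\bar{H}_i$, contradicting (2). So some $Y_m=\emptyset$ with $m\le l+1$, and if $m<l+1$ we pad $\{j_1,\dots,j_m\}$ with additional distinct indices to reach a tuple in $[n+1]^{l+1}_<$ (using $n+1\ge l+1$, which follows from (2) since in $K\PPP^l$ an intersection of at most $l$ projective hyperplanes always has dimension $\ge 0$).

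The main obstacle will be the bookkeeping in the (2)$\Rightarrow$(3) argument: one must carefully verify that skipped hyperplanes still contain every later $Y_k$, so that running out of indices without emptying $Y$ indeed contradicts (2). This is what guarantees the process must terminate by emptying $Y$ within at most $l+1$ strict drops, yielding the required ordered tuple of indices.
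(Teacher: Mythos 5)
Your proof is correct, but it organizes the equivalences differently from the paper. The paper proves $(2)\Leftrightarrow(3)$ and then $(1)\Leftrightarrow(3)$, linking essentiality of $\A$ to the index set by observing that a missing tuple in $\mathcal{I}(\bar{\A})$ can always be arranged to contain the index $n+1$ of $\bar{H}_\infty$, so that the remaining $l$ affine hyperplanes meet in a point; you instead prove $(1)\Leftrightarrow(2)$ by a rank computation on the cone and $(2)\Leftrightarrow(3)$ by a greedy dimension-drop argument. Your greedy procedure actually supplies a detail the paper only asserts, namely that an essential projective arrangement already contains $l+1$ hyperplanes with empty intersection; your bookkeeping (skipped hyperplanes contain the current $Y_{t-1}$, hence all later $Y_k$) is sound, and since each strict drop lowers the dimension by exactly one, the process in fact always terminates at $m=l+1$, so the padding step is never needed. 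Two small points worth making explicit: first, your identification of ``$\A$ essential'' with ``$\rk(\A)=l$'' tacitly uses that $l$ affine hyperplanes whose linear parts are independent meet in a (nonempty) point, i.e.\ that an affine system with invertible coefficient matrix is solvable --- this is exactly the place where affineness could have caused trouble, so it deserves a sentence; second, the paper's definition $\rk(\A)=\codim(\bigcap_{H\in\A}H)$ is awkward when the total intersection is empty, so it is cleaner to phrase your row-reduction directly as a statement about the rank of the matrix of linear parts. On balance, the paper's route has the advantage of tying condition (1) directly to membership of tuples containing $n+1$ in $\mathcal{I}(\bar{\A})$, which is the form reused in the proof of Theorem~\ref{theo:combinatequivresult}, while yours is more self-contained on the implication the paper leaves implicit.
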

\begin{proof} We start by proving that (3) is equivalent to (2). 
If (3) is satisfied, then there exists $(i_1,\dots, i_{l+1})\in[n+1]^{l+1}_<$ such that $\bar{H}_{i_1}\cap\cdots\cap\bar{H}_{i_{l+1}}=\emptyset$, and hence $\bar{\A}$ is essential.
On the other hand, if $\bar{\A}$ is essential then there exist $l+1$ hyperplanes $\bar{H}_{i_1},\dots\bar{H}_{i_{l+1}}$ in $\bar{\A}$ whose intersection is empty. This shows that the conditions (2) and (3) are equivalent.

We will now prove that (1) is equivalent to (3). Condition (3) is equivalent to the existence of $(i_1,\dots, i_{l+1})\in[n+1]^{l+1}_<$ such that $\bar{H}_{i_1}\cap\cdots\cap\bar{H}_{i_{l+1}}=\emptyset$.
This happens if and only if there exist $l$ hyperplanes $H_{i_1}, \dots, H_{i_{l}}\in\A$ such that $\bar{H}_{i_1}\cap\cdots\cap\bar{H}_{i_{l}}\cap\bar{H}_{n+1}=\emptyset$ if and only if there exist $l$ hyperplanes $H_{i_1}, \dots, H_{i_{l}}\in\A$ such that $H_{i_1}\cap\cdots\cap H_{i_l}$ is a point. This last fact is equivalent to (1).
\end{proof}

Let $K_1$ and $K_2$ be two fields (non necessarily distinct), and consider $\A^{(j)} =\{H^{(j)}_1, \dots, H^{(j)}_n\}\in A_n(K_j^l)$, for $j=1,2$, two hyperplane arrangements.
\begin{Definition} $\A^{(1)}$ and $\A^{(2)}$ are \textbf{combinatorially equivalent} if
$$\dim(H^{(1)}_{i_1}\cap\cdots\cap H^{(1)}_{i_k}) = \dim(H^{(2)}_{i_1}\cap\cdots\cap H^{(2)}_{i_k}),$$
for all $1\le k\le n$ and $1\le i_1<\cdots<i_k\le n$, where the dimension of the empty set is equal to $-1$. In this case, we write $\A^{(1)}\backsim\A^{(2)}$.
\end{Definition}



The following result is a generalization of \cite[Proposition 3]{terao2002moduli}. 

\begin{Theorem}\label{theo:combinatequivresult} Let $\A$ be an essential arrangement in $K^l$. Then $\mathcal{I}(\bar{\A})$ determines $L(\A)$, and vice versa.
\end{Theorem}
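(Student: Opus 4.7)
The plan is to use the matroid $M=M(c\A)$ of the central essential arrangement $c\A$ in $K^{l+1}$ as a bridge between $\mathcal{I}(\bar\A)$ and $L(\A)$: both objects encode $M$ together with the distinguished element $n+1$ corresponding to $\tilde H_{n+1}$. By Lemma~\ref{lemm:allessentialcheck}, essentiality of $\A$ implies $\rk(M)=l+1$.

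I would first show that $\mathcal{I}(\bar\A)$ determines $L(\A)$. For a size-$(l+1)$ subset $\sigma\subseteq [n+1]$, the projective intersection $\bigcap_{i\in\sigma}\bar H_i$ is non-empty iff the $l+1$ defining linear forms in $K^{l+1}$ are linearly dependent, iff $\sigma$ is a non-basis of $M$. Since $\rk(M)=l+1$, knowing all non-bases of size $l+1$ determines the bases, and hence $M$. Then, for each $\mathcal{B}\subseteq\{1,\dots,n\}$, I would compare $\rk_M(\mathcal{B})$ with $\rk_M(\mathcal{B}\cup\{n+1\})$: if the latter is strictly greater, then $\bigcap_{i\in\mathcal{B}}\tilde H_i\not\subseteq\tilde H_{n+1}$, so $\bigcap_{i\in\mathcal{B}} H_i\ne\emptyset$ with dimension $l-\rk_M(\mathcal{B})$; otherwise $\bigcap_{i\in\mathcal{B}} H_i=\emptyset$. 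This recovers every entry of $L(\A)$.

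For the converse, I would verify that $L(\A)$ determines $M$ (and hence $\mathcal{I}(\bar\A)$, which is then just the set of dependent $(l+1)$-subsets of $M$). When $\bigcap_{i\in\mathcal{B}} H_i\ne\emptyset$, the identity $\rk_M(\mathcal{B})=l-\dim\bigcap_{i\in\mathcal{B}}H_i$ is read directly from $L(\A)$. When $\bigcap_{i\in\mathcal{B}} H_i=\emptyset$, one has $\rk_M(\mathcal{B})=r+1$, where $r$ is the rank of the normal vectors $\{a_i\}_{i\in\mathcal{B}}$, and a short linear-algebra argument shows $r=\max\{\codim\bigcap_{i\in\mathcal{B}'}H_i:\mathcal{B}'\subseteq\mathcal{B},\ \bigcap_{i\in\mathcal{B}'}H_i\ne\emptyset\}$, again determined by $L(\A)$; an analogous formula handles subsets containing $n+1$. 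The main obstacle will be this affine-versus-central bookkeeping: $M$ registers dependencies among normal vectors even when the affine intersection is empty, so one must carefully track the distinguished role of $\tilde H_{n+1}$. Once this dictionary is pinned down, the theorem reduces to the fact that a rank-$(l+1)$ matroid is determined by its set of non-bases of size $l+1$.
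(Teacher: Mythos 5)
Your proposal is correct, but it takes a genuinely different route from the paper. The paper argues directly on the defining conditions: for the forward direction it uses essentiality to characterize $\dim(H_{i_1}\cap\cdots\cap H_{i_k})=l-k$ by completability to a zero-dimensional $l$-fold intersection, which is detected by $(i_1,\dots,i_l,n+1)\notin\mathcal{I}(\bar{\A})$; for the converse it splits into the two cases $i_{l+1}=n+1$ and $i_{l+1}<n+1$ and translates emptiness of projective intersections back to affine data via Lemma~\ref{lemm:allessentialcheck}. You instead factor both directions through the matroid $M(c\A)$ of the coned arrangement: $\mathcal{I}(\bar{\A})$ is exactly the set of dependent $(l+1)$-subsets, hence (since essentiality forces $\rk(M)=l+1$) determines the bases and so all of $M$, and the rank function of $M$ together with the distinguished element $n+1$ encodes every affine intersection dimension via the dichotomy $\rk_M(\mathcal{B}\cup\{n+1\})>\rk_M(\mathcal{B})$ versus equality. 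Your route is more conceptual and makes transparent why top-codimension data suffices to recover the whole lattice (a rank-$(l+1)$ matroid is determined by its non-bases of size $l+1$); it also recovers strictly more, namely the full matroid of the cone. The cost is the extra affine-versus-central bookkeeping you flag yourself: when $\bigcap_{i\in\mathcal{B}}H_i=\emptyset$ you must justify $\rk_M(\mathcal{B})=r+1$ with $r$ the rank of the normal vectors, and that $r$ is read off $L(\A)$ as the maximal codimension over subsets with non-empty intersection; these claims are true (the first is the rank of the augmented matrix of an inconsistent system, the second follows since any $r$ independent normals give a consistent subsystem), but they need to be written out, whereas the paper's case analysis avoids them entirely. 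Both arguments rest on the same use of essentiality to guarantee rank $l+1$.
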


\begin{proof} Consider $(i_1, \dots, i_k)\in[n]^k_<$. Since $\A$ is essential, then $\dim(H_{i_1}\cap\cdots\cap H_{i_k})=l-k $ if and only if there exist $1\le i_{k+1}<\cdots<i_l\le n$ such that $\dim(H_{i_1}\cap\cdots\cap H_{i_l})=0$. Passing to the projectivization, this is equivalent to
the existence of $1\le i_{k+1}<\cdots<i_l\le n$ such that $\bar{H}_{i_1}\cap\cdots\cap \bar{H}_{i_l}\cap\bar{H}_\infty=\emptyset$. This fact is then equivalent to the existence of $1\le i_{k+1}<\cdots<i_l\le n$ such that $(i_1,\dots, i_l, n+1)\notin\mathcal{I}(\bar{\A})$. From the knowledge of which $(i_1, \dots, i_k)\in[n]^k_<$ have  $\dim(H_{i_1}\cap\cdots\cap H_{i_k})=l-k$, we can easily reconstruct $L(\A)$. This shows that $\mathcal{I}(\bar{\A})$ determines $L(\A)$.

Consider $(i_1,\dots, i_{l+1})\in[n+1]^{l+1}_<$. If $i_{l+1} = n+1$, then $\bar{H}_{i_{l+1}}=\bar{H}_\infty$. Moreover, $(i_1,\dots, i_{l+1})\notin\mathcal{I}(\bar{\A})\Leftrightarrow \bar{H}_{i_1}\cap\cdots\cap\bar{H}_{i_l}\cap\bar{H}_\infty=\emptyset$
$\Leftrightarrow H_{i_1}\cap\cdots\cap H_{i_l}$ is a point  $\Leftrightarrow \dim(H_{i_1}\cap\cdots\cap H_{i_l})=0.$
Suppose now that $i_{l+1} < n+1$ and let $\mathcal{B}=\{H_{i_1}, \dots, H_{i_{l+1}}\}$. We have $(i_1,\dots, i_{l+1})\notin\mathcal{I}(\bar{\A})\Leftrightarrow$  $\bar{H}_{i_1}\cap\cdots\cap\bar{H}_{i_{l+1}}=\emptyset$
$\Leftrightarrow H_{i_1}\cap\cdots\cap H_{i_{l+1}}=\emptyset$ and $\bar{H}_{i_1}\cap\cdots\cap\bar{H}_{i_{l+1}}\cap\bar{H}_\infty=\emptyset$ 
$\Leftrightarrow H_{i_1}\cap\cdots\cap H_{i_{l+1}}=\emptyset$ and $\bar{\mathcal{B}}$ is essential. By Lemma~\ref{lemm:allessentialcheck}, this is equivalent to $H_{i_1}\cap\cdots\cap H_{i_{l+1}}=\emptyset$ and $\mathcal{B}$ is essential. This fact is then equivalent to $\dim(H_{i_1}\cap\cdots\cap H_{i_{l+1}})=-1$ and there exist $l$ hyperplanes in $\mathcal{B}$ whose intersection is a point and hence it is zero dimensional. This shows that $L(\A)$ determines $\mathcal{I}(\bar{\A})$.
\end{proof}

\section{Modular case}

From now on we will assume that $\A=\{H_1,\dots, H_n\}$ is a central and essential arrangement in $\mathbb{Q}^l$. After clearing denominators, we can suppose that $\alpha_i \in\mathbb{Z}[x_1,\dots, x_l]$ for all $i=1,\dots, n$, and hence that $Q(\A)=\prod_{i=1}^n\alpha_i\in\mathbb{Z}[x_1,\dots, x_l]$. Moreover, we can also assume that there exists no prime number $p$ that divides any $\alpha_i$.

Let $p$ be a prime number, and consider the canonical homomorphism $$\pi_p\colon \mathbb{Z}[x_1,\dots, x_l]\to \mathbb{F}_p[x_1,\dots, x_l].$$ Since $\A$ is central and we assume that there exists no prime number $p$ that divides any $\alpha_i$, this implies that $\pi_p(\alpha_i)$ is a non-zero linear homogeneous polynomial, for all $i=1,\dots,n$. Since we are interested in the case when $\A$ and its reduction modulo $p$ are both arrangements with the same number of hyperplanes, we call $p$ \textbf{good} for $\A$ if $\pi_p(Q(\A))$ is reduced. Clearly, this is equivalent to the requirement that $\pi_p(\alpha_i)$ and $\pi_p(\alpha_j)$ are not one multiple of the other, for all $1\le i<j\le n$.
Notice that the number of primes $p$ that are non-good for $\A$ is finite, see \cite{palezzato2018free}.

Let now $p$ be a good prime for $\A$. Consider $\A_p=\{(H_1)_p,\dots, (H_n)_p\}$ the arrangement in $\mathbb{F}_p^l$ defined by $\pi_p(Q(\A))\in\mathbb{F}_p[x_1,\dots, x_l]$ and define $(\alpha_i)_p=\pi_p(\alpha_i)$. Hence, by construction, $\A\in A_n(\mathbb{Q}^l)$ and $\A_p\in A_n(\mathbb{F}^l_p)$. Moreover, since $\A$ is central, also $\A_p$ is central.

\begin{Definition}
Given $\A=\{H_1,\dots, H_n\}\in A_n(K^l)$, define $$\mathfrak{I}(\A)=\{(i_1,\dots,i_l)\in[n]^l_<~|~\dim(H_{i_1}\cap\cdots\cap H_{i_l})=0\}.$$
\end{Definition}

\begin{Remark}\label{remark:essiffnotempty} $\A$ is essential if and only if $\mathfrak{I}(\A)\ne\emptyset$. \end{Remark}

\begin{Lemma}\label{lemma:towardsamecombin} The following facts are equivalent
\begin{enumerate} 
\item $\mathcal{I}(\bar{\A})=\mathcal{I}(\bar{\A}_p)$.
\item $\mathfrak{I}(\A)=\mathfrak{I}(\A_p)$.
\end{enumerate}
\end{Lemma}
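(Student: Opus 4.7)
The plan is to exploit the fact that both $\A$ and $\A_p$ are central, which makes $\mathcal{I}(\bar{\A})$ and $\mathcal{I}(\bar{\A}_p)$ essentially redundant data once we know $\mathfrak{I}(\A)$ and $\mathfrak{I}(\A_p)$ respectively. More precisely, I would prove the stronger statement that in the central case, $\mathfrak{I}(\A)$ and $\mathcal{I}(\bar{\A})$ determine each other by an explicit combinatorial formula, and then the equivalence (1)$\Leftrightarrow$(2) is immediate (applying the formula both to $\A$ and to $\A_p$).

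To produce the formula, I would split the tuples $(i_1,\dots,i_{l+1})\in[n+1]^{l+1}_<$ into two cases according to whether $i_{l+1}\le n$ or $i_{l+1}=n+1$. In the first case, since $\A$ is central, each $\alpha_{i_j}$ is a homogeneous linear form in $x_1,\dots,x_l$, so the projective point $[1:0:\cdots:0]\in K\PPP^l$ lies in every $\bar{H}_{i_j}$ with $i_j\le n$. Hence every such tuple automatically belongs to $\mathcal{I}(\bar{\A})$, and the same holds for $\bar{\A}_p$. In the second case, as in the proof of Theorem~\ref{theo:combinatequivresult}, the tuple $(i_1,\dots,i_l,n+1)$ lies in $\mathcal{I}(\bar{\A})$ iff $\bar{H}_{i_1}\cap\cdots\cap\bar{H}_{i_l}\cap\bar{H}_\infty\ne\emptyset$, which translates (setting $x_0=0$) to the existence of a nonzero solution in $K^l$ of $\alpha_{i_1}=\cdots=\alpha_{i_l}=0$, i.e. to $\dim(H_{i_1}\cap\cdots\cap H_{i_l})\ge 1$. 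Centrality is what guarantees that this last dimension is positive exactly when the intersection is not the single point $\{0\}$. Thus
\[
[n+1]^{l+1}_<\setminus\mathcal{I}(\bar{\A})=\{(i_1,\dots,i_l,n+1)\mid (i_1,\dots,i_l)\in\mathfrak{I}(\A)\},
\]
and the analogous identity holds for $\A_p$, which is also central (a good prime preserves the number of hyperplanes and the centrality).

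Since this formula is a bijective correspondence between $\mathfrak{I}(\A)$ and $[n+1]^{l+1}_<\setminus\mathcal{I}(\bar{\A})$, equality $\mathcal{I}(\bar{\A})=\mathcal{I}(\bar{\A}_p)$ holds if and only if their complements in $[n+1]^{l+1}_<$ coincide, if and only if $\mathfrak{I}(\A)=\mathfrak{I}(\A_p)$. This handles every situation uniformly, including the case where $\A_p$ fails to be essential (then both $\mathfrak{I}(\A_p)=\emptyset$ and $\mathcal{I}(\bar{\A}_p)=[n+1]^{l+1}_<$ by Remark~\ref{remark:essiffnotempty} and Lemma~\ref{lemm:allessentialcheck}, so no extra case analysis is required).

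I do not expect any serious obstacle here; the main subtlety is just keeping straight the passage between the affine picture in $K^l$ and the projective picture in $K\PPP^l$, and in particular making sure that, in the central case, the ``trivial'' tuples (those with $i_{l+1}\le n$) really contribute nothing because of the common point $[1:0:\cdots:0]$. Once that observation is isolated, the argument is essentially a one-line reindexing that mirrors the second half of the proof of Theorem~\ref{theo:combinatequivresult}.
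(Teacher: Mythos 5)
Your proof is correct and follows essentially the same route as the paper's: both arguments observe that, by centrality, every tuple in $[n]^{l+1}_<$ (all indices at most $n$) automatically lies in $\mathcal{I}(\bar{\A})\cap\mathcal{I}(\bar{\A}_p)$, and that the tuples ending in $n+1$ outside $\mathcal{I}(\bar{\A})$ are exactly those of the form $(i_1,\dots,i_l,n+1)$ with $(i_1,\dots,i_l)\in\mathfrak{I}(\A)$. The only cosmetic difference is that you make the resulting bijection of complements explicit and note that the preliminary essentiality discussion in the paper's proof is not actually needed, which is a fair observation.
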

\begin{proof} 
If (1) is satisfied, since $\A$ is essential, then by Lemma~\ref{lemm:allessentialcheck}, also $\A_p$ is essential. 
Similarly, if (2) is satisfied, then by Remark~\ref{remark:essiffnotempty}, also $\A_p$ is essential. 

Since both $\A$ and $\A_p$ are central, then for all  $(i_1,\dots,i_{l+1})\in[n]^{l+1}_<$, we have that $(i_1,\dots,i_{l+1})\in\mathcal{I}(\bar{\A})\cap\mathcal{I}(\bar{\A}_p)$.
Now $(i_1,\dots,i_l)\in\mathfrak{I}(\A)$ if and only if $H_{i_1}\cap\cdots\cap H_{i_l}$ is a point. This is equivalent to $\bar{H}_{i_1}\cap\cdots\cap\bar{H}_{i_l}\cap\bar{H}_\infty=\emptyset$ and hence to $(i_1,\dots,i_l, n+1)\notin\mathcal{I}(\bar{\A})$.
A similar proof shows that $(i_1,\dots,i_l)\in\mathfrak{I}(\A_p)$ if and only if $(i_1,\dots,i_l,n+1)\notin\mathcal{I}(\bar{\A_p})$.
Putting these three properties together we get our result.
\end{proof}

Since the goal of this section is to determine in which situation an arrangement and its reduction modulo a prime number have isomorphic lattices via the use of minimal strong $\sigma$-Gr\"obner bases, we will now recall some properties of ideals in $\mathbb{Z}[x_1,\dots ,x_l]$.

Let $I$ be an ideal of $\mathbb{Z}[x_1,\dots ,x_l]$ and $\sigma$ a term ordering. Given $f\in \mathbb{Z}[x_1,\dots ,x_l]$, we define the \textbf{leading term} of $f$ as $\LT_\sigma(f)=\max_\sigma\{t\in\Supp(f)\}$,
the \textbf{leading coefficient} of $f$ as the coefficient multiplying the $\LT_\sigma(f)$ in the writing of $f$ and we denote it by $\LC_\sigma(f)$, and
the \textbf{leading monomial} of $f$ as $\LM_\sigma(f)=\LC_\sigma(f)\LT_\sigma(f)$.

\begin{Definition} Let $I$ be an ideal of $\mathbb{Z}[x_1,\dots ,x_l]$, $\sigma$ a term ordering and $G=\{g_1,\dots, g_t\}$ a set of non-zero polynomials in $I$.
We say that $G$ is a \textbf{minimal strong $\sigma$-Gr\"obner basis} for $I$ if the following conditions hold true
\begin{enumerate}
\item $G$ forms a set of generators of $I$;
\item for each $f\in I$, there exists $i\in\{1,\dots,t\}$ such that $\LM_\sigma(g_i)$ divides $\LM_\sigma(f)$;
\item if $i\ne j$, then $\LM_\sigma(g_i)$ does not divide $\LM_\sigma(g_j)$.
\end{enumerate}
\end{Definition}

\begin{Remark}[c.f. \cite{adams1994introduction}, Lemma 4.5.8]\label{rem:minstronggbiggb} 
The reduced $\sigma$-Gr\"obner basis of an ideal $I$ of $\mathbb{Z}[x_1,\dots ,x_l]$ is also a minimal strong $\sigma$-Gr\"obner basis of $I$.
Moreover, every minimal strong $\sigma$-Gr\"obner basis of $I$ is also a $\sigma$-Gr\"obner basis. 
\end{Remark}

\begin{Proposition}[\cite{adams1994introduction}, Exercise 4.5.9] Let $I$ be a non-zero ideal of $\mathbb{Z}[x_1,\dots ,x_l]$ and $\sigma$ a term ordering. 
Then there always exists a minimal strong $\sigma$-Gr\"obner basis of $I$.
\end{Proposition}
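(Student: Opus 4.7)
The plan is to construct the desired basis by starting from an arbitrary finite strong $\sigma$-Gröbner basis of $I$ and pruning it until the minimality condition holds.

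First I would invoke the existence of a finite strong $\sigma$-Gröbner basis $G_0 = \{g_1,\dots,g_t\}$ of $I$, that is, a finite subset of $I$ satisfying conditions (1) and (2) of the definition. This uses Noetherianity of $\mathbb{Z}[x_1,\dots,x_l]$ (Hilbert basis theorem) combined with a Buchberger-style completion algorithm adapted to the principal ideal domain $\mathbb{Z}$: one must close under S-polynomials and also under G-polynomials handling gcd combinations of leading coefficients, but this is standard (cf.\ \cite{adams1994introduction}, Chapter 4). Such a $G_0$ may fail condition (3), and that is the only defect to repair.

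Next I would apply the following pruning step iteratively: whenever there exist distinct indices $i, j$ with $\LM_\sigma(g_i)$ dividing $\LM_\sigma(g_j)$, I remove $g_j$ from the current set. Condition (2) is preserved: for any $f \in I$ the previous set supplied some $g_k$ with $\LM_\sigma(g_k) \mid \LM_\sigma(f)$; if $k \ne j$ nothing changes, while if $k = j$ then $\LM_\sigma(g_i) \mid \LM_\sigma(g_j) \mid \LM_\sigma(f)$, so $g_i$ still witnesses (2). Condition (1) then follows from (2): any $f \in I$ admits the reduction $f \mapsto f - \tfrac{\LM_\sigma(f)}{\LM_\sigma(g_i)} g_i$, which strictly decreases the $\sigma$-leading term, and since $\sigma$ is a well-ordering on terms, iterating expresses $f$ as a combination of the current set.

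Each pruning step strictly decreases cardinality, so the iteration terminates after finitely many steps, producing a set $G$ satisfying (1), (2), and (3) by construction, i.e.\ a minimal strong $\sigma$-Gröbner basis. The only non-routine ingredient is the initial existence of $G_0$; the pruning itself is purely combinatorial bookkeeping. As an alternative (and shorter) route, one may simply observe that by Remark~\ref{rem:minstronggbiggb} the reduced $\sigma$-Gröbner basis of $I$ --- whose existence over $\mathbb{Z}[x_1,\dots,x_l]$ is classical --- is already a minimal strong $\sigma$-Gröbner basis of $I$.
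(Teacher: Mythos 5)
Your argument is correct. Note that the paper itself gives no proof of this proposition: it is quoted directly from Adams--Loustaunau as Exercise 4.5.9, so there is nothing to compare against line by line. Your route --- take a finite strong $\sigma$-Gr\"obner basis (whose existence over the PID $\mathbb{Z}$ comes from the Noetherian property plus the S-polynomial/G-polynomial completion of Chapter 4 of that book) and prune elements whose leading monomial is divisible by that of another --- is the standard solution to that exercise, and the details you supply are sound: the pruning preserves condition (2) by transitivity of divisibility of leading monomials, condition (1) is recovered from (2) by the division argument (which is legitimate over $\mathbb{Z}$ precisely because condition (2) is divisibility of leading \emph{monomials}, coefficient included), and termination is by finiteness. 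Your alternative one-line route via Remark~\ref{rem:minstronggbiggb} (the reduced $\sigma$-Gr\"obner basis is already a minimal strong one, and Lemma 4.5.8 precedes Exercise 4.5.9 so there is no circularity) is also valid and is presumably the intended shortcut.
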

\begin{Lemma}[{\cite[Lemma 5.9]{palezzato2018free}}]\label{lem:sameLMandLC} 
Let $I$ be an ideal of $\mathbb{Z}[x_1,\dots,x_l]$, and $\sigma$ a term ordering. Let $G_1$ and $G_2$ be two minimal strong 
$\sigma$-Gr\"obner bases of $I$. Then $\{\LM_\sigma(g)~|~g\in G_1\} = \{\LM_\sigma(g)~|~g\in G_2\}$. Consequently, we have $|G_1| = |G_2|$ 
and $\{\LC_\sigma(g)~|~g\in G_1\} = \{\LC_\sigma(g)~|~g\in G_2\}$.
\end{Lemma}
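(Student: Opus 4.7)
The plan is to show that every leading monomial appearing in $G_2$ also appears in $G_1$ (and vice versa) by a standard two-step divisibility chase using properties (2) and (3) of the minimal strong $\sigma$-Gr\"obner basis, then deduce the cardinality and leading coefficient statements as immediate consequences.

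The central step is as follows. Fix an arbitrary element $g \in G_2$. Since $g \in I$ and $G_1$ is a minimal strong $\sigma$-Gr\"obner basis of $I$, condition (2) applied to $G_1$ gives some $g' \in G_1$ with $\LM_\sigma(g') \mid \LM_\sigma(g)$ in $\mathbb{Z}[x_1,\dots,x_l]$. Now $g' \in I$, so applying condition (2) to $G_2$ yields some $g'' \in G_2$ with $\LM_\sigma(g'') \mid \LM_\sigma(g')$. Chaining the two divisibilities gives $\LM_\sigma(g'') \mid \LM_\sigma(g)$. By condition (3) for $G_2$, no two distinct elements of $G_2$ can have leading monomials with one dividing the other, so we must have $g'' = g$. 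But then $\LM_\sigma(g) = \LM_\sigma(g'') \mid \LM_\sigma(g') \mid \LM_\sigma(g)$, which forces the two divisibilities to be equalities; in particular $\LM_\sigma(g') = \LM_\sigma(g)$. This shows $\{\LM_\sigma(g) \mid g \in G_2\} \subseteq \{\LM_\sigma(g) \mid g \in G_1\}$, and the reverse inclusion follows by exchanging the roles of $G_1$ and $G_2$.

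For the cardinality claim, I would note that within a single minimal strong $\sigma$-Gr\"obner basis $G_i$ the leading monomials are pairwise distinct: if two elements of $G_i$ shared a leading monomial, then each would divide the other, contradicting condition (3). Hence the assignment $g \mapsto \LM_\sigma(g)$ is a bijection from $G_i$ onto the common set of leading monomials, so $|G_1| = |G_2|$. Finally, the equality of the sets of leading coefficients is automatic, because in our setting $\LM_\sigma(g) = \LC_\sigma(g)\,\LT_\sigma(g)$ encodes both the coefficient and the term, so equal leading monomials carry equal leading coefficients.

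The only delicate point to keep track of is that all divisibility statements take place in $\mathbb{Z}[x_1,\dots,x_l]$, meaning divisibility of the integer coefficient together with divisibility of the power product; the argument above uses nothing beyond the transitivity of this relation and the antisymmetry that follows from condition (3), so there is no real obstacle. One could also, as a sanity check, observe via Remark~\ref{rem:minstronggbiggb} that the reduced $\sigma$-Gr\"obner basis of $I$ is itself a minimal strong $\sigma$-Gr\"obner basis, which gives a canonical representative whose leading monomials must then coincide with those of any $G_i$.
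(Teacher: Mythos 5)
Your argument is correct and is precisely the standard divisibility-chase proof of this lemma (the present paper does not reprove it but imports it from \cite[Lemma 5.9]{palezzato2018free}, where the same two-step chase via conditions (2) and (3) is used). The only hair worth splitting is that mutual divisibility in $\mathbb{Z}[x_1,\dots,x_l]$ forces $\LM_\sigma(g')=\pm\LM_\sigma(g)$ rather than literal equality; this is harmless here, since every application of the lemma concerns only which primes divide the leading coefficients, and the ambiguity disappears entirely under the paper's implicit normalization that leading coefficients are positive.
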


\begin{Remark} The previous lemma implies that $\{\LM_\sigma(g)~|~g\in G\}$ generates the monomial ideal $\LM_\sigma(I)$, for $G$ any minimal strong $\sigma$-Gr\"obner basis of $I$.
\end{Remark}

By Lemma \ref{lem:sameLMandLC}, we can introduce the following definition. See \cite{palezzato2018free} and \cite{pauer1992lucky}, for more details.

\begin{Definition} Let $I$ be an ideal of $\mathbb{Z}[x_1,\dots,x_l]$, and $\sigma$ be a term ordering. 
If a prime number $p$ does not divide the leading coefficient of any polynomial 
in a minimal strong $\sigma$-Gr\"obner basis for $I$, then we will say $p$ is \textbf{$\sigma$-lucky} for $I$.
\end{Definition}

In other words, $p$ is $\sigma$-lucky for $I$ if and only if it is a non-zero divisor in $\mathbb{Z}[x_1,\dots,x_l]/\LM_\sigma(I)$.
\begin{Remark}\label{rem:finiteluckyprime} Given $I$ an ideal of $\mathbb{Z}[x_1,\dots,x_l]$ and $\sigma$ a term ordering, since a minimal strong $\sigma$-Gr\"obner basis is finite,
 then the number of primes that are not $\sigma$-lucky for $I$ is finite.
\end{Remark}

Now that we have all the tools to work with minimal strong $\sigma$-Gr\"obner basis, we can use them to study the combinatorics of arrangements.

\begin{Proposition}\label{prop:luckytosamezerodim} Consider $(i_1,\dots,i_l)\in[n]^l_<$ and $p$ a good prime for $\A$ that is $\sigma$-lucky for the ideal $I_\mathbb{Z}=\ideal{\alpha_{i_1},\dots,\alpha_{i_l}}_\mathbb{Z}\subseteq\mathbb{Z}[x_1,\dots,x_l]$. Then the following fact are equivalent
\begin{enumerate}
\item $(i_1,\dots,i_l)\in\mathfrak{I}(\A)$.
\item $(i_1,\dots,i_l)\in\mathfrak{I}(\A_p)$.
\end{enumerate}  
\end{Proposition}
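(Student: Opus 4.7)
The plan is to rephrase both sides of the claimed equivalence as ideal-equalities and then transport them between characteristics through a minimal strong $\sigma$-Gr\"obner basis. Since $\A$ is central (and so is $\A_p$, because $p$ is good), each $\alpha_{i_j}$ and its reduction $(\alpha_{i_j})_p$ is a non-zero linear homogeneous form, and the intersection $H_{i_1}\cap\cdots\cap H_{i_l}$ in $\mathbb{Q}^l$ is the common zero locus of $l$ linear forms. This intersection is $0$-dimensional iff the $\alpha_{i_j}$ are linearly independent over $\mathbb{Q}$, iff $I_\mathbb{Q}:=I_\mathbb{Z}\otimes_\mathbb{Z}\mathbb{Q}=\ideal{x_1,\dots,x_l}$ in $\mathbb{Q}[x_1,\dots,x_l]$. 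Analogously, $(i_1,\dots,i_l)\in\mathfrak{I}(\A_p)$ iff $I_{\mathbb{F}_p}:=\ideal{(\alpha_{i_1})_p,\dots,(\alpha_{i_l})_p}=\ideal{x_1,\dots,x_l}$ in $\mathbb{F}_p[x_1,\dots,x_l]$. So the proposition reduces to the equivalence $I_\mathbb{Q}=\ideal{x_1,\dots,x_l}\iff I_{\mathbb{F}_p}=\ideal{x_1,\dots,x_l}$.

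Next, I would fix a minimal strong $\sigma$-Gr\"obner basis $G=\{g_1,\dots,g_t\}$ of $I_\mathbb{Z}$ and observe that $G$ consists of linear forms: every element of $I_\mathbb{Z}$ of degree $\le 1$ is necessarily a $\mathbb{Z}$-linear combination $\sum c_j\alpha_{i_j}$ with $c_j\in\mathbb{Z}$, so the linear part of $I_\mathbb{Z}$ is already a $\mathbb{Z}$-submodule of linear forms, and the minimal strong basis inherits this feature. By Remark~\ref{rem:minstronggbiggb}, $G$ is a $\sigma$-Gr\"obner basis of $I_\mathbb{Z}$ and, after extending scalars, of $I_\mathbb{Q}$. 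Because $p$ is $\sigma$-lucky, $p\nmid\LC_\sigma(g_i)$ for every $i$, so $\pi_p(g_i)\ne 0$ and $\LT_\sigma(\pi_p(g_i))=\LT_\sigma(g_i)$. The standard lucky-prime lifting principle (cf.\ \cite{pauer1992lucky}, \cite{palezzato2018free}) then gives that $\pi_p(G)$ is a $\sigma$-Gr\"obner basis of $\pi_p(I_\mathbb{Z})=I_{\mathbb{F}_p}$. Consequently, the monomial ideals $\LT_\sigma(I_\mathbb{Q})\subseteq\mathbb{Q}[x_1,\dots,x_l]$ and $\LT_\sigma(I_{\mathbb{F}_p})\subseteq\mathbb{F}_p[x_1,\dots,x_l]$ are generated by the very same set $\{\LT_\sigma(g):g\in G\}$.

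To close, I would invoke the elementary fact that, for any field $K$, an ideal $J$ of $K[x_1,\dots,x_l]$ generated by linear forms equals the maximal ideal $\ideal{x_1,\dots,x_l}$ iff $\LT_\sigma(J)=\ideal{x_1,\dots,x_l}$; indeed, if each $x_j$ lies in $\LT_\sigma(J)$, then $J$ contains $l$ linear forms with pairwise distinct leading terms, hence $K$-linearly independent, hence generating the maximal ideal. Applying this once over $\mathbb{Q}$ and once over $\mathbb{F}_p$, and combining with the equality of leading term ideals from the previous paragraph, yields $I_\mathbb{Q}=\ideal{x_1,\dots,x_l}\iff\LT_\sigma(I_\mathbb{Q})=\ideal{x_1,\dots,x_l}\iff\LT_\sigma(I_{\mathbb{F}_p})=\ideal{x_1,\dots,x_l}\iff I_{\mathbb{F}_p}=\ideal{x_1,\dots,x_l}$, which is exactly the desired equivalence.

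The main obstacle is the second paragraph: making the passage from $G$ to $\pi_p(G)$ rigorous, i.e.\ verifying that $\sigma$-luckiness is precisely what is needed for $\pi_p(G)$ to remain a Gr\"obner basis of $I_{\mathbb{F}_p}$ with unchanged leading terms. This is exactly where the characterization of $\sigma$-lucky primes as non-zero-divisors in $\mathbb{Z}[x_1,\dots,x_l]/\LM_\sigma(I_\mathbb{Z})$ does the work, together with Lemma~\ref{lem:sameLMandLC}, which guarantees that the argument depends only on the leading terms of any minimal strong basis and not on a specific choice of $G$.
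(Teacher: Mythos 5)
Your overall strategy is sound and, modulo the points below, it does prove the proposition; but it is a genuinely different (and heavier) route than the paper's, and it contains one false auxiliary claim and one step that is outsourced rather than proved. The false claim is that a minimal strong $\sigma$-Gr\"obner basis $G$ of $I_\mathbb{Z}$ ``consists of linear forms'' because the degree-one part of $I_\mathbb{Z}$ is a $\mathbb{Z}$-module of linear forms. That inference does not follow, and the statement is false: for $I_\mathbb{Z}=\ideal{2x+y,\,2x-y}\subseteq\mathbb{Z}[x,y]$ (a legitimate instance with $l=2$) one has $y^2=(y-x)(2x+y)+x(2x-y)\in I_\mathbb{Z}$ with leading monomial $1\cdot y^2$, while every linear form in $I_\mathbb{Z}$ has even leading coefficient; hence no leading monomial of a linear form in $I_\mathbb{Z}$ divides $1\cdot y^2$, and every minimal strong $\sigma$-Gr\"obner basis must contain a quadratic element. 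Fortunately you never actually use the linearity of $G$ --- your argument only needs the set $\{\LT_\sigma(g):g\in G\}$ and the fact that the \emph{ideals} $I_\mathbb{Q}$ and $I_{\mathbb{F}_p}$ are generated by linear forms --- so this is a removable blemish. (Your closing lemma is fine, but note that under a non--degree-compatible $\sigma$ you should pass to the degree-one homogeneous component to conclude that the witnesses with leading terms $x_j$ are genuinely linear; homogeneity of $I$ supplies this.)

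The substantive gap is in the implication $(2)\Rightarrow(1)$. There your argument rests entirely on the assertion that $\pi_p(G)$ is a $\sigma$-Gr\"obner basis of $I_{\mathbb{F}_p}$, so that $\LT_\sigma(I_{\mathbb{F}_p})$ is generated by $\{\LT_\sigma(g):g\in G\}$. This ``lucky-prime lifting principle'' is a true theorem (it is essentially Pauer's result), but it is neither stated nor proved in the present paper, and the remark that the non-zero-divisor characterization of luckiness ``does the work'' is not a proof of it --- one must actually argue that every element of $I_{\mathbb{F}_p}$ admits a lift whose leading monomial survives reduction. The paper sidesteps this entirely: for $(2)\Rightarrow(1)$ it lifts the generators $x_i$ of $I_p=\ideal{x_1,\dots,x_l}$ to elements $x_i+pf_i\in I_\mathbb{Q}$ and concludes $I_\mathbb{Q}=\ideal{x_1,\dots,x_l}$ using only that $I_\mathbb{Q}$ is homogeneous and proper --- no Gr\"obner bases and, notably, no luckiness hypothesis are needed in that direction. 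For $(1)\Rightarrow(2)$ your argument and the paper's coincide in substance: one only needs that the basis elements whose leading monomials are $\lambda_ix_i$ reduce, by luckiness, to elements of $I_p$ with leading terms $x_i$. So either import the lifting principle with a precise citation and statement, or replace your $(2)\Rightarrow(1)$ with the elementary lift; as written, that direction is not self-contained.
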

\begin{proof} Consider the ideal $I=\ideal{\alpha_{i_1},\dots,\alpha_{i_l}}_\Q\subseteq \Q[x_1,\dots,x_l]$ and the ideal $I_p=\ideal{(\alpha_{i_1})_p,\dots,(\alpha_{i_l})_p}\subseteq\mathbb{F}_p[x_1,\dots,x_l]$.

If $(i_1,\dots,i_l)\in\mathfrak{I}(\A_p)$, then $(H_{i_1})_p\cap\cdots\cap (H_{i_l})_p$ is the origin, and hence $I_p=\ideal{x_1,\dots, x_l}$. This implies that for each $i=1,\dots, l$, there exists $f_i\in \mathbb{Z}[x_1,\dots,x_l]$ of degree $1$ such that $x_i+pf_i\in I$. Since $I$ is an ideal in $\Q[x_1,\dots,x_l]$, we can transform the $f_i$ in such way that $f_i\in\Q[x_{i+1},\dots,x_l]$. This gives us that $\ideal{x_1,\dots, x_l}\subseteq I$.
Since $\A$ is central, then $I$ is a homogenous ideal such that $I\subsetneq \Q[x_1,\dots,x_l]$. This shows that $\ideal{x_1,\dots, x_l}=I$ and hence $(i_1,\dots,i_l)\in\mathfrak{I}(\A)$.

To show the opposite inclusion, assume that $(i_1,\dots,i_l)\in\mathfrak{I}(\A)$. This implies that $H_{i_1}\cap\cdots\cap H_{i_l}$ is the origin, and hence $I$ is zero dimensional and $I=\ideal{x_1,\dots, x_l}$.
Since $I_p$ is a homogenous ideal generated in degree $1$, $I_p\subseteq\ideal{x_1,\dots, x_l}$.
Consider now $\{g_1,\dots, g_l\}$ a minimal strong $\sigma$-Gr\"obner basis for $I_\mathbb{Z}$. Since $I$ is zero-dimensional, then $\{\LM_\sigma(g_1),\dots,\LM_\sigma(g_l)\}=\{\lambda_1x_1,\dots,\lambda_lx_l\}$,
where $\lambda_i\in\mathbb{Z}_{>0}$. 
Since we have $g_j=\sum_{k=1}^lh_{kj}\alpha_{i_k}$, for some $h_{kj}\in\mathbb{Z}[x_1,\dots,x_l]$, then $\pi_p(g_j)\in I_p$. Moreover, since $p$ is $\sigma$-lucky for $I_\mathbb{Z}$, then $\pi_p(g_j)\ne0$ and $\LM_\sigma(\pi_p(g_j))=\pi_p(\LM_\sigma(g_j))\ne0$. This implies that for each $i=1,\dots,l$, there exists $f_i\in I_p$ such that $\LT_\sigma(f_i)=x_i$. This shows that $\ideal{x_1,\dots, x_l}\subseteq I_p$ and hence  $I_p=\ideal{x_1,\dots, x_l}$. This implies that $(i_1,\dots,i_l)\in\mathfrak{I}(\A_p)$.
\end{proof}

As described in Proposition~\ref{prop:luckytosamezerodim}, we are interested in $\sigma$-lucky primes for certain ideals over the integers. This fact motivates the following definition.

\begin{Definition}\label{def:l-lucky} Consider an integer $1\le k\le n$. A prime number $p$ is called \textbf{$(\sigma,k)$-lucky} for $\A$, if it is $\sigma$-lucky for all the ideals of the form $\ideal{\alpha_{i_1},\dots,\alpha_{i_k}}_\mathbb{Z}$, where $\codim(H_{i_1}\cap\cdots\cap H_{i_k})=k$. 
\end{Definition}

\begin{Remark} A prime number $p$ is $(\sigma,l)$-lucky for $\A$, if it is $\sigma$-lucky for all the ideals of the form $\ideal{\alpha_{i_1},\dots,\alpha_{i_l}}_\mathbb{Z}$, for $(i_1,\dots,i_l)\in\mathfrak{I}(\A)$.
\end{Remark}


We can now state the main result of the section.

\begin{Theorem}\label{theo:maintheosamecomb} Let $\A$ be a central and essential arrangement in $\mathbb{Q}^l$. The following facts are equivalent
\begin{enumerate}
\item $p$ is a good and $(\sigma,l)$-lucky prime number for $\A$. 
\item $\A\backsim\A_p$, i.e. $\A$ and $\A_p$ are combinatorially equivalent.
\end{enumerate}
\end{Theorem}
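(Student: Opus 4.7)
My plan is to reduce combinatorial equivalence, via Theorem~\ref{theo:combinatequivresult} and Lemma~\ref{lemma:towardsamecombin}, to the tuple-by-tuple equality $\mathfrak{I}(\A)=\mathfrak{I}(\A_p)$, and then to control each tuple through Proposition~\ref{prop:luckytosamezerodim}.

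For $(1)\Rightarrow(2)$, assuming $p$ good and $(\sigma,l)$-lucky, I would establish the two inclusions of $\mathfrak{I}(\A)=\mathfrak{I}(\A_p)$ separately. The inclusion $\mathfrak{I}(\A)\subseteq\mathfrak{I}(\A_p)$ is immediate from Proposition~\ref{prop:luckytosamezerodim} applied to each $(i_1,\dots,i_l)\in\mathfrak{I}(\A)$, since Definition~\ref{def:l-lucky} supplies the required $\sigma$-luckiness of $\ideal{\alpha_{i_1},\dots,\alpha_{i_l}}_\mathbb{Z}$. The reverse inclusion comes from the $(2)\Rightarrow(1)$ half of Proposition~\ref{prop:luckytosamezerodim}, whose proof in fact uses only the goodness of $p$ and not $\sigma$-luckiness. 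Lemma~\ref{lemma:towardsamecombin} and Theorem~\ref{theo:combinatequivresult} then yield $\A\backsim\A_p$.

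For the converse $(2)\Rightarrow(1)$, goodness is a quick contradiction argument: were $\alpha_i$ and $\alpha_j$ proportional modulo $p$, then $\dim((H_i)_p\cap(H_j)_p)=l-1$ would clash with $\dim(H_i\cap H_j)=l-2$ coming from the distinctness of $H_i,H_j$ in $\A$. For $(\sigma,l)$-luckiness, I fix $(i_1,\dots,i_l)\in\mathfrak{I}(\A)$; Lemma~\ref{lemma:towardsamecombin} places it in $\mathfrak{I}(\A_p)$, so the coefficient matrix $M\in\Mat_l(\mathbb{Z})$ of $\alpha_{i_1},\dots,\alpha_{i_l}$ satisfies $p\nmid\det(M)$. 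The remaining task is then to translate this determinantal condition into the statement that no leading coefficient of a minimal strong $\sigma$-Gr\"obner basis $\{g_1,\dots,g_l\}$ of $I_\mathbb{Z}=\ideal{\alpha_{i_1},\dots,\alpha_{i_l}}_\mathbb{Z}$ is divisible by $p$.

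The main obstacle is the bridging identity $|\det(M)|=\prod_{j=1}^{l}\LC_\sigma(g_j)$. Centrality makes $I_\mathbb{Z}$ homogeneous and $I_\mathbb{Z}\otimes\Q=\ideal{x_1,\dots,x_l}$, forcing each $g_j$ to be linear with leading monomial $\lambda_j x_{k_j}$ for some permutation $k_\bullet$ of $\{1,\dots,l\}$; a staircase division argument using the strong Gr\"obner basis property then shows that $\{g_1,\dots,g_l\}$ is a $\mathbb{Z}$-basis of the degree-one submodule $N$ of $I_\mathbb{Z}$. Since $\{\alpha_{i_1},\dots,\alpha_{i_l}\}$ is manifestly another $\mathbb{Z}$-basis of $N$, the base change lies in $\GL_l(\mathbb{Z})$, and the coefficient matrix of the $g_j$'s is triangular in the order dictated by $\sigma$ with diagonal $\lambda_1,\dots,\lambda_l$, giving $|\det(M)|=\prod_j\lambda_j$. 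With the identity in hand, $p\nmid\det(M)$ forces $p\nmid\lambda_j$ for every $j$, so $p$ is $\sigma$-lucky for $I_\mathbb{Z}$; letting $(i_1,\dots,i_l)$ range over $\mathfrak{I}(\A)$ finally delivers $(\sigma,l)$-luckiness.
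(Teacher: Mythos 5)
Your reduction of combinatorial equivalence to $\mathfrak{I}(\A)=\mathfrak{I}(\A_p)$ via Theorem~\ref{theo:combinatequivresult} and Lemma~\ref{lemma:towardsamecombin}, your proof of $(1)\Rightarrow(2)$, and your goodness argument in $(2)\Rightarrow(1)$ all follow the paper's route; your remark that the inclusion $\mathfrak{I}(\A_p)\subseteq\mathfrak{I}(\A)$ needs only goodness (so that Proposition~\ref{prop:luckytosamezerodim} may be invoked on tuples not yet known to lie in $\mathfrak{I}(\A)$) is correct and is a point the paper passes over silently.

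The gap is in the luckiness half of $(2)\Rightarrow(1)$, namely in the structural claim that a minimal strong $\sigma$-Gr\"obner basis of $I_\mathbb{Z}=\ideal{\alpha_{i_1},\dots,\alpha_{i_l}}_\mathbb{Z}$ consists of exactly $l$ linear forms with leading monomials $\lambda_jx_{k_j}$. This is false. Take $l=2$, $\alpha_1=2x+y$, $\alpha_2=2x-y$ (primitive, distinct, meeting only at the origin) and $x>_\sigma y$. Then $I_\mathbb{Z}=\ideal{2x+y,\,2y}$ and $y^2=y(2x+y)-x(2y)\in I_\mathbb{Z}$ has $\LM_\sigma(y^2)=1\cdot y^2$, which is divisible neither by $2x$ nor by $2y$ (the coefficient $1$ is not divisible by $2$); moreover no element of $I_\mathbb{Z}$ has leading monomial $\pm y$ or $\pm 1$, since every degree-one element of $I_\mathbb{Z}$ has even leading coefficient and $I_\mathbb{Z}$ is a proper homogeneous ideal. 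Hence every minimal strong $\sigma$-Gr\"obner basis of this $I_\mathbb{Z}$ contains a third, non-linear element with leading monomial $\pm y^2$, e.g.\ $\{2x+y,\,2y,\,y^2\}$. Consequently your bridging identity $|\det(M)|=\prod_{j=1}^{l}\LC_\sigma(g_j)$ is not available as stated, and, more importantly, even after showing $p\nmid\lambda_j$ for the linear members of the basis you have not shown that $p$ avoids the leading coefficients of the remaining higher-degree members, which the definition of $\sigma$-lucky requires; the determinant of the degree-one lattice says nothing a priori about those coefficients (in the example they happen to equal $1$, but that needs proof in general). The paper avoids the determinant route: assuming $p\mid\lambda_r$ for the $\sigma$-minimal such $r$, it lifts $x_r\in I_p$ to $x_r+pg\in I_\mathbb{Z}$, corrects it by $p$-multiples of the $g_j$ with $j<r$ to an element with leading monomial $(1+p\beta)x_r$, and contradicts the divisibility property of a strong Gr\"obner basis, since no leading monomial of a basis element can divide $(1+p\beta)x_r$. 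To salvage your strategy you would need the stronger statement that $|\det(M)|$ equals the product of the leading coefficients of \emph{all} elements of the minimal strong basis, which is a genuinely different claim requiring its own proof.
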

\begin{proof} Assume that $p$ is a good and $(\sigma,l)$-lucky prime number for $\A$. Since $p$ is $(\sigma,l)$-lucky for $\A$, by Proposition~\ref{prop:luckytosamezerodim}, $\mathfrak{I}(\A)=\mathfrak{I}(\A_p)$. By Lemma~\ref{lemma:towardsamecombin}, this implies that $\mathcal{I}(\bar{\A})=\mathcal{I}(\bar{\A}_p)$. We can then conclude that 
$\A\backsim\A_p$ by Theorem~\ref{theo:combinatequivresult}.

Vice versa, assume now that $\A\backsim\A_p$. This clearly implies that $\A$ and $\A_p$ are both (simple)  arrangements with $|\A|=|\A_p|$. This then forces $p$ to be good for $\A$.
Suppose that $p$ is not $(\sigma,l)$-lucky for $\A$. This implies that there exists $\{i_1,\dots,i_l\}\in\mathfrak{I}(\A)$ such that $p$ divides a leading coefficient
in a minimal strong $\sigma$-Gr\"obner basis of $I_\mathbb{Z}=\ideal{\alpha_{i_1},\dots, \alpha_{i_l}}_\mathbb{Z}$. 
Since $\{i_1,\dots,i_l\}\in\mathfrak{I}(\A)$, we can consider $\{g_1,\dots, g_l\}$ a minimal strong $\sigma$-Gr\"obner basis for $I_\mathbb{Z}$ such that $\LM_\sigma(g_i)=\lambda_ix_i$,
where $\lambda_i\in\mathbb{Z}_{>0}$ for all $i=1,\dots, l$. Consider $r=\min\{j\in[l]~|~p \text{ divides } \lambda_j\}$.
Since $\A\backsim\A_p$ and $\{i_1,\dots,i_l\}\in\mathfrak{I}(\A)$, then $\{i_1,\dots,i_l\}\in\mathfrak{I}(\A_p)$ and hence $I_p=\ideal{(\alpha_{i_1})_p,\dots, (\alpha_{i_l})_p}=\ideal{x_1,\dots,x_l}$. In particular, $x_r\in I_p$, and hence there exists $g\in\mathbb{Z}[x_1,\dots, x_l]$ such that $f_r=x_r+pg\in I_\mathbb{Z}$. Since $p$ does not divide $\lambda_i$ with $i<r$, there exist $\gamma_1,\dots,\gamma_{r-1}\in\mathbb{Z}$ such that $\tilde{f}_r=f_r+\sum_{j=1}^{r-1}p\gamma_jg_j\in I_\mathbb{Z}$ with $\LM_\sigma(\tilde{f}_r)=(1+p\beta)x_r$ for some $\beta\in\mathbb{Z}$. Clearly, $p$ does not divide $1+p\beta$ and hence $\lambda_rx_r$ does not divide $\LM_\sigma(\tilde{f}_r)$ but this is impossible since $\{g_1,\dots, g_l\}$ is a minimal strong $\sigma$-Gr\"obner basis for $I_\mathbb{Z}$.
\end{proof}

By the discussion at the beginning of Section 4 and Remark~\ref{rem:finiteluckyprime}, the set of prime numbers that are good and $(\sigma,l)$-lucky  for $\A$ is infinite. This implies that Theorem~\ref{theo:maintheosamecomb} is a generalization of \cite[Proposition 3.11.9]{stanley1998enumerative}, since our result describes explicitly how to compute the prime numbers for which $\A$ and $\A_p$ are not combinatorially equivalent.

Since the characteristic polynomial of an arrangement is determined by its lattice of intersections, we have the following
\begin{Corollary} Let $\A$ be a central and essential arrangement in $\mathbb{Q}^l$, and $p$ a good and $(\sigma,l)$-lucky prime number for $\A$. Then $\chi(\A,t)=\chi(\A_p,t)$.
\end{Corollary}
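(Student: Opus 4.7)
The plan is to combine Theorem~\ref{theo:maintheosamecomb} with the fact, already mentioned in the paragraph preceding the corollary, that the characteristic polynomial is a lattice invariant. Since the hypotheses are exactly those of Theorem~\ref{theo:maintheosamecomb}, the first step is immediate: that theorem yields $\A\backsim\A_p$.

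Next I would unwind what combinatorial equivalence gives at the level of $L(\A)$ and $L(\A_p)$. By definition, for every subset $\{i_1,\dots,i_k\}\subseteq[n]$ we have
$$\dim(H_{i_1}\cap\cdots\cap H_{i_k})=\dim((H_{i_1})_p\cap\cdots\cap(H_{i_k})_p).$$
This means the map sending $H_{i_1}\cap\cdots\cap H_{i_k}\in L(\A)$ to $(H_{i_1})_p\cap\cdots\cap(H_{i_k})_p\in L(\A_p)$ is well defined, bijective, and preserves both the reverse-inclusion order and the rank function $\rk=\codim$. In particular the induced isomorphism of posets identifies the M\"obius functions $\mu_\A$ and $\mu_{\A_p}$ and the dimensions $\dim(X)=l-\rk(X)$.

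Finally, since
$$\chi(\A,t)=\sum_{X\in L(\A)}\mu(X)t^{\dim X}\qquad\text{and}\qquad\chi(\A_p,t)=\sum_{Y\in L(\A_p)}\mu(Y)t^{\dim Y},$$
the lattice isomorphism matches summands term-by-term, and the two characteristic polynomials coincide. No step is really an obstacle here: the corollary is a formal consequence of Theorem~\ref{theo:maintheosamecomb}, and the only routine verification is that the bijection described above is well defined, i.e.\ that two different subsets $\mathcal{B},\mathcal{B}'\subseteq\A$ producing the same intersection also produce the same intersection after reduction mod~$p$, which itself follows from the dimension equalities applied to $\mathcal{B}\cup\mathcal{B}'$.
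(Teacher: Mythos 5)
Your proposal is correct and follows exactly the route the paper intends: the corollary is stated as an immediate consequence of Theorem~\ref{theo:maintheosamecomb} together with the remark that the characteristic polynomial is determined by $L(\A)$, and the paper gives no further proof. Your additional verification that combinatorial equivalence induces a rank-preserving lattice isomorphism (using centrality, so that containment of linear subspaces of equal dimension forces equality) is a correct filling-in of the detail the paper leaves implicit.
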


\begin{Remark} Let $q$ be a power of a prime $p$ and $\A_{\mathbb{F}_q}$ the arrangement in $\mathbb{F}_q^l$ defined by the class of $Q(\A)$ in $\mathbb{F}_q[x_1, \dots, x_l]$.
Then the same argument of Theorem~\ref{theo:maintheosamecomb} shows that if $p$ is good and $(\sigma,l)$-lucky for $\A$, then $\A\backsim\A_{\mathbb{F}_q}$.
\end{Remark}

In \cite{ardila2007computing}, Ardila described a finite field method to compute the coboundary polynomial, and hence the Tutte polynomial, of a given arrangement. His result involved the use of powers of large enough primes to make sure that $\A$ and $\A_{\mathbb{F}_q}$ are combinatorially equivalent. Thanks to Theorem~\ref{theo:maintheosamecomb}, we can rewrite his result as follows.

\begin{Theorem} Let $\A$ be a central and essential arrangement in $\mathbb{Q}^l$, and $p$ a good and $(\sigma,l)$-lucky prime number for $\A$. Then
$$\overline{\chi}_\A(q,t)=\sum_{P\in\mathbb{F}_q^l}t^{h(P)},$$
where $h(P)$ denotes the number of hyperplanes of $\A_{\mathbb{F}_q}$ that contain $P$.
\end{Theorem}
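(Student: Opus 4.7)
The plan is to deduce this as an immediate consequence of Ardila's finite field identity in \cite{ardila2007computing}, using the Remark immediately preceding the statement to replace Ardila's hypothesis that $q$ be ``sufficiently large'' with the concrete condition that $p$ be good and $(\sigma,l)$-lucky.

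First, I would invoke the Remark above, which says that under the hypothesis that $p$ is good and $(\sigma,l)$-lucky for $\A$ one has $\A\backsim\A_{\mathbb{F}_q}$. Since the Tutte polynomial of an arrangement depends only on its underlying matroid (equivalently, its combinatorics in the sense of the definition in Section~3), combinatorially equivalent arrangements share the same Tutte polynomial and therefore the same coboundary polynomial. Consequently $\overline{\chi}_\A(x,y)=\overline{\chi}_{\A_{\mathbb{F}_q}}(x,y)$ as polynomials, and in particular the specialization at $x=q$ agrees on the two sides.

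Next, I would apply Ardila's identity to the arrangement $\A_{\mathbb{F}_q}$, which is a genuine central arrangement over the finite field $\mathbb{F}_q$ (centrality is inherited from $\A$ and simplicity follows from $p$ being good). Ardila's formula is an intrinsic point-counting identity over $\mathbb{F}_q$: partitioning $\mathbb{F}_q^l$ according to the smallest flat $X\in L(\A_{\mathbb{F}_q})$ containing each point and applying M\"obius inversion yields
$$\overline{\chi}_{\A_{\mathbb{F}_q}}(q,t)=\sum_{P\in\mathbb{F}_q^l}t^{h(P)}.$$
Combining this with the previous equality gives the desired formula.

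There is essentially no serious obstacle here; the whole point of Theorem~\ref{theo:maintheosamecomb} and the subsequent Remark is precisely that the delicate ``$q$ large enough'' condition in Ardila's theorem can be made effective and replaced by the explicit arithmetic conditions ``good'' and ``$(\sigma,l)$-lucky''. The only small care required is to confirm that Ardila's identity itself is purely an $\mathbb{F}_q$-internal statement about $\A_{\mathbb{F}_q}$, independent of any lift to characteristic zero; inspection of his proof shows this is the case, so the reduction outlined above is clean.
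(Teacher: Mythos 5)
Your argument is correct and is exactly the (implicit) one in the paper: the paper states this theorem without proof, presenting it as Ardila's finite field identity with his ``$q$ large enough'' hypothesis replaced by the good and $(\sigma,l)$-lucky conditions via the preceding Remark. Your two steps --- $\A\backsim\A_{\mathbb{F}_q}$ forces $\overline{\chi}_\A=\overline{\chi}_{\A_{\mathbb{F}_q}}$ (all subarrangement ranks and centrality being determined by the combinatorics), then Ardila's point-count applied intrinsically to $\A_{\mathbb{F}_q}$ --- are precisely the intended reduction.
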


\section{On Terao's conjecture}

We first recall the basic notions and properties of free hyperplane arrangements.
 
We denote by $\Der_{K^l} =\{\sum_{i=1}^l f_i\partial_{x_i}~|~f_i\in S\}$ the $S$-module of \textbf{polynomial vector fields} on $K^l$ (or $S$-derivations). 
Let $\delta =  \sum_{i=1}^l f_i\partial_{x_i}\in \Der_{K^l}$. Then $\delta$ is  said to be \textbf{homogeneous of polynomial degree} $d$ if $f_1, \dots, f_l$ are homogeneous polynomials of degree~$d$ in $S$. 
In this case, we write $\pdeg(\delta) = d$.

Let $\A$ be a central arrangement in $K^l$. Define the \textbf{module of vector fields logarithmic tangent} to $\A$ (or logarithmic vector fields) by
$$D(\A) = \{\delta\in \Der_{K^l}~|~ \delta(\alpha_i) \in \ideal{\alpha_i} S, \forall i\}.$$

The module $D(\A)$ is obviously a graded $S$-module and we have $$D(\A)= \{\delta\in \Der_{K^l}~|~ \delta(Q(\A)) \in \ideal{Q(\A)} S\}.$$ 

\begin{Definition} 
A central arrangement $\A$ in $K^l$ is said to be \textbf{free with exponents $(e_1,\dots,e_l)$} 
if and only if $D(\A)$ is a free $S$-module and there exists a basis $\delta_1,\dots,\delta_l$ of $D(\A)$ 
such that $\pdeg(\delta_i) = e_i$, or equivalently $D(\A)\cong\bigoplus_{i=1}^lS(-e_i)$.
\end{Definition}

A lot it is known about free arrangements, however there is still some mystery around the notion of freeness. See \cite{orlterao}, \cite{yoshinaga2014freeness}, \cite{Gin-freearr} and  \cite{suyama2019} for more details on freeness. For example, Terao's conjecture asserting the dependence of freeness only on the combinatorics is the longstanding open problem in this area. 

\begin{Conjecture}[Terao] 
The freeness of a hyperplane arrangement depends only on its lattice of intersections.
\end{Conjecture}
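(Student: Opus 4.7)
Terao's conjecture is one of the longest-standing open problems in the theory of arrangements, so any plan is inherently speculative; what follows is a scheme that leverages the results developed earlier in this paper. By the reduction established in Section~5, it suffices to prove the statement over finite fields: if freeness is combinatorial for arrangements in $\mathbb{F}_p^l$, then the rational case follows automatically. So I would fix a prime $p$, a central essential arrangement $\A$ in $\mathbb{F}_p^l$, and a second arrangement $\A'$ (over $\mathbb{F}_p$ or some finite extension) with $L(\A)\cong L(\A')$, and try to show that $D(\A)$ is free if and only if $D(\A')$ is.

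The first concrete step is to pin down the candidate exponents. By Terao's factorization theorem, if $\A$ is free with exponents $(e_1,\dots,e_l)$ then $\chi(\A,t)=\prod_{i=1}^l(t-e_i)$; since the characteristic polynomial is determined by $L(\A)$, one has $\chi(\A',t)=\chi(\A,t)$, so $(e_1,\dots,e_l)$ are the only possible exponents of $\A'$. The remaining task is to produce an actual basis of $D(\A')$ in those degrees. The natural tool is Saito's criterion: one needs $\delta_1',\dots,\delta_l'\in D(\A')$ with $\pdeg(\delta_i')=e_i$ and $\det[\delta_j'(x_i)] = c\,Q(\A')$ for some nonzero constant $c$. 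Translating a Saito basis of $D(\A)$ into one of $D(\A')$ would require a combinatorial recipe for its coefficients, and the absence of such a recipe is essentially the heart of the conjecture.

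A more geometric angle is to work on the moduli space $A_n(\mathbb{F}_p^l)$ stratified by combinatorial type; Theorem~\ref{theo:combinatequivresult} gives a clean parameterization of the strata via $\mathcal{I}(\bar{\A})$. On a fixed stratum the graded Betti numbers of $D(\A)$ are upper-semicontinuous in the coefficients of the defining forms, so freeness is a constructible condition, and the goal becomes showing it is constant on each stratum. This would follow if any two arrangements with the same $\mathcal{I}(\bar{\A})$ could be connected by a family of arrangements of constant combinatorial type along which freeness persists.

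The main obstacle is precisely this last step, and it is why the conjecture has resisted attack for four decades: a combinatorial stratum may fail to be path-connected in the required sense, and in positive characteristic subtle phenomena can occur with no characteristic-zero analogue. A realistic version of the plan would therefore bypass deformation arguments entirely and instead construct a combinatorial invariant of $L(\A)$ that computes the graded Betti numbers of $D(\A)$; producing such an invariant, or conversely exhibiting two combinatorially equivalent arrangements one of which is free and one not, is the real content of the problem. The proposal above should be read as a roadmap for where the difficulty lies rather than as a serious path to a proof.
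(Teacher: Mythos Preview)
The statement you were asked to address is Terao's \emph{conjecture}, and the paper does not prove it; it is stated precisely as an open problem (labeled \texttt{Conjecture}), and the paper's contribution in Section~5 is the conditional result that the conjecture over all $\mathbb{F}_p$ implies the conjecture over $\mathbb{Q}$. So there is no ``paper's own proof'' against which to compare your proposal.

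You correctly recognized this and framed your answer as a speculative roadmap rather than a proof. That is the appropriate response: your first paragraph accurately cites the paper's reduction to finite fields, your remarks on the factorization theorem and Saito's criterion are standard and correct, and your closing paragraph honestly identifies where the real obstruction lies. Nothing you wrote is mathematically wrong, and nothing more can reasonably be expected for a statement that has been open since 1980. The only caveat is that you should not present this as a ``proof proposal'' at all; it is a survey of known approaches and their failure points, which is the right thing to submit here given that the target statement has no proof in the paper or anywhere else.
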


In \cite{palezzato2018free}, we characterized the prime numbers $p$ for which the freeness of $\A$ implies the freeness of $\A_p$ and, vice versa, the ones for which the freeness of $\A_p$ implies the freeness of $\A$. Specifically, we proved the following two results.

\begin{Theorem}[{\cite[Theorem 4.3]{palezzato2018free}}]\label{theo:fromchar0tocharP} If $\A$ is a free arrangement in $\mathbb{Q}^l$ with exponents $(e_1,\dots, e_l)$, then $\A_p$ is free in $\mathbb{F}_p^l$ with exponents $(e_1,\dots, e_l)$, 
for all good primes except possibly a finite number of them.
\end{Theorem}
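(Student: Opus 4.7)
The plan is to transport an appropriately normalized basis of $D(\A)$ to a basis of $D(\A_p)$ by reducing modulo $p$, using \emph{Saito's criterion} to translate freeness into a determinant identity that behaves well under reduction. Concretely, I would pick a homogeneous $\QQ$-basis $\delta_1,\dots,\delta_l$ of $D(\A)$ with $\pdeg(\delta_i)=e_i$, write $\delta_i=\sum_j m_{ij}\partial_{x_j}$, and rescale each $\delta_i$ by a nonzero rational so that every entry $m_{ij}$ lies in $\Z[x_1,\dots,x_l]$ and the integer gcd of all coefficients appearing in the row $(m_{i1},\dots,m_{il})$ equals $1$. Saito's criterion over $\QQ$ then gives $\det(M)=c\cdot Q(\A)$ for some nonzero $c\in\QQ$; since $Q(\A)$ is primitive in $\Z[x_1,\dots,x_l]$ by the standing assumption at the start of Section~4 and $\det(M)\in\Z[x_1,\dots,x_l]$, Gauss's lemma forces $c\in\Z\setminus\{0\}$. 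Moreover, each identity $\delta_i(\alpha_j)=f_{ij}\alpha_j$ lifts from $\QQ[x_1,\dots,x_l]$ to $\Z[x_1,\dots,x_l]$ by primitivity of $\alpha_j$ and Gauss's lemma.

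I would then exclude the finite set of primes that are non-good for $\A$ together with the finite set of prime divisors of $c$. For any remaining good prime $p$, applying $\pi_p$ to the integer identities above yields $(\delta_i)_p\in D(\A_p)$, and the row-primitivity normalization forces $(\delta_i)_p\ne 0$ with all coefficients still homogeneous of degree $e_i$, so $\pdeg((\delta_i)_p)=e_i$. Because $p\nmid c$ and $\pi_p(Q(\A))$ differs from $Q(\A_p)$ only by a unit of $\mathbb{F}_p^\ast$, the coefficient matrix $M_p$ satisfies $\det(M_p)=\pi_p(\det(M))=\pi_p(c)\cdot\pi_p(Q(\A))\in\mathbb{F}_p^\ast\cdot Q(\A_p)$. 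Saito's criterion for $\A_p$ over $\mathbb{F}_p$ then forces $(\delta_1)_p,\dots,(\delta_l)_p$ to be a basis of $D(\A_p)$, hence $\A_p$ is free with exponents $(e_1,\dots,e_l)$.

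The delicate step is the twofold integral normalization of the basis: one needs a form in which Saito's determinant identity actually lifts to an \emph{integer} scalar $c$ and, simultaneously, no $\delta_i$ can be uniformly divided through by any single prime. Without both conditions, the exceptional set of primes could a priori be infinite; with them, the exceptions are cleanly contained in the union of non-good primes and prime divisors of $c$, which is finite. Everything else reduces to routine bookkeeping with Gauss's lemma and the functoriality of Saito's criterion under $\pi_p$.
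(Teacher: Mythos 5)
This paper does not prove the statement itself --- it is imported verbatim from \cite[Theorem 4.3]{palezzato2018free} --- and your argument (choose an integral, content-normalized homogeneous basis of $D(\A)$, use Saito's criterion to get $\det(M)=c\,Q(\A)$ with $c\in\Z\setminus\{0\}$ via Gauss's lemma, and exclude the finitely many non-good primes and divisors of $c$ so that the reduced derivations form a basis of $D(\A_p)$ by Saito's criterion over $\mathbb{F}_p$) is correct and is essentially the proof given in that reference. No gaps: the only points worth checking, namely that Saito's criterion is valid over $\mathbb{F}_p$ for the reduced polynomial $\pi_p(Q(\A))$ and that $\delta_i(\alpha_j)\in\ideal{\alpha_j}_{\Z[x_1,\dots,x_l]}$ before reduction, are exactly the ones you address.
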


\begin{Theorem}[{\cite[Theorem 6.1]{palezzato2018free}}]\label{theo:fromcharPto0} 
Let $p$ be a good prime number for $\A$ that is $\sigma$-lucky for $J(\A)_\mathbb{Z}$, for some term ordering $\sigma$, where $J(\A)_\mathbb{Z}$ denotes the Jacobian ideal of $\A$ as ideal of $\mathbb{Z}[x_1,\dots, x_l]$. If $\A_p$ is free in $\mathbb{F}_p^l$ with exponents $(e_1,\dots, e_l)$, 
then $\A$ is free in $\mathbb{Q}^l$ with exponents $(e_1,\dots, e_l)$.
\end{Theorem}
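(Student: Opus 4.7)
The plan is to encode freeness of $\A$ in the graded Betti numbers of $S/J(\A)$, where $J(\A)\subseteq S$ denotes the Jacobian ideal of $Q(\A)$, and to use the $\sigma$-lucky hypothesis to show that these Betti numbers coincide in characteristic zero and in characteristic $p$. Recall that by Saito's criterion, $\A$ is free with exponents $(e_1,\dots,e_l)$ precisely when the minimal graded free resolution of $S/J(\A)$ takes a prescribed shape determined by the $e_i$; so the freeness statement translates directly into a statement about the Betti table of $S/J(\A)$, and the task becomes the comparison of such tables over $\mathbb{Q}$ and $\mathbb{F}_p$.

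First I would exploit the $\sigma$-lucky hypothesis to identify the two initial ideals. Let $G=\{g_1,\dots,g_t\}$ be a minimal strong $\sigma$-Gr\"obner basis of $J(\A)_\mathbb{Z}$. By Lemma~\ref{lem:sameLMandLC} and the hypothesis, $p$ does not divide any $\LC_\sigma(g_i)$. Because $\pi_p$ commutes with partial differentiation, we have $\pi_p(J(\A)_\mathbb{Z}) = J(\A_p)$, and $\pi_p(G)$ is a set of nonzero polynomials whose leading terms equal those of $G$. I would then verify that $\pi_p(G)$ is a minimal strong $\sigma$-Gr\"obner basis of $J(\A_p)$ in $\mathbb{F}_p[x_1,\dots,x_l]$, so that the initial ideals $\LT_\sigma(J(\A)\otimes\mathbb{Q})$ and $\LT_\sigma(J(\A_p))$ coincide as monomial ideals. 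In particular, the Hilbert functions of $S_\mathbb{Q}/J(\A)$ and $S_{\mathbb{F}_p}/J(\A_p)$ agree.

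Next I would compare Betti numbers via this common initial ideal. Upper-semicontinuity of graded Betti numbers in flat families gives $\beta^{\mathbb{Q}}_{i,j}(S_\mathbb{Q}/J(\A))\le\beta^{\mathbb{F}_p}_{i,j}(S_{\mathbb{F}_p}/J(\A_p))$, with both sides dominated by the field-independent Betti numbers of $S/\LT_\sigma(J)$. Assuming $\A_p$ is free with exponents $(e_1,\dots,e_l)$, the modular Betti table attains the specific shape prescribed by Saito's criterion, and the matching Hilbert functions combined with the alternating-sum identity for Hilbert series force the rational Betti table to equal the modular one term by term. Hence the minimal free resolution of $S_\mathbb{Q}/J(\A)$ has the free shape, and $\A$ is free with exponents $(e_1,\dots,e_l)$.

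The main obstacle is this last promotion from an \emph{inequality} of Betti numbers (coming from semicontinuity) to an \emph{equality}. The rigidity that makes this work is the coincidence of initial ideals guaranteed by $\sigma$-luckiness, not merely the coincidence of Hilbert functions: it is this that rules out the ``consecutive cancellations'' in the characteristic-zero minimal resolution that could a priori shrink the rational Betti table below the modular one. Thus the entire argument hinges on reducing the minimal strong $\sigma$-Gr\"obner basis of $J(\A)_\mathbb{Z}$ modulo $p$ and verifying that it remains a minimal strong $\sigma$-Gr\"obner basis of the modular Jacobian, which is precisely the content of the $\sigma$-lucky hypothesis.
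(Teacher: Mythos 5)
First, a caveat: the paper does not prove this statement --- Theorem~\ref{theo:fromcharPto0} is imported verbatim from \cite[Theorem 6.1]{palezzato2018free} --- so there is no in-paper proof to compare against. Your overall strategy (reduce a minimal strong $\sigma$-Gr\"obner basis of $J(\A)_\mathbb{Z}$ modulo $p$, deduce that the initial ideals and hence the Hilbert functions of $S_\mathbb{Q}/J(\A)$ and $S_{\mathbb{F}_p}/J(\A_p)$ coincide, and transfer a homological characterization of freeness) is the right one and matches the cited source. Two presentational issues before the real problem: the criterion you want is Terao's criterion in its Jacobian-ideal form ($\A$ is free if and only if $S/J(\A)$ is zero or Cohen--Macaulay of codimension $2$, cf.\ \cite{Gin-freearr}), not Saito's criterion, which is about the determinant of a coefficient matrix of derivations; and to invoke semicontinuity of Betti numbers you must first check that $\mathbb{Z}_{(p)}[x_1,\dots,x_l]/J(\A)_{\mathbb{Z}_{(p)}}$ is flat over $\mathbb{Z}_{(p)}$, i.e.\ has no $p$-torsion. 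This does follow from $\sigma$-luckiness (a $p$-torsion element with minimal leading term would contradict $p$ being a nonzerodivisor modulo $\LM_\sigma(J(\A)_\mathbb{Z})$), but it is not automatic and you do not address it.

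The genuine gap is your final step. From $\beta^{\mathbb{Q}}_{i,j}\le\beta^{\mathbb{F}_p}_{i,j}$ together with equal Hilbert functions you only obtain that the nonnegative differences $d_{i,j}=\beta^{\mathbb{F}_p}_{i,j}-\beta^{\mathbb{Q}}_{i,j}$ have vanishing alternating sums in each degree $j$; a consecutive cancellation $d_{1,j}=d_{2,j}>0$ is perfectly consistent with everything you have established, so equality of the Betti tables does not follow. Your appeal to the common initial ideal does not close this gap: $\LT_\sigma(J)$ only bounds Betti numbers from above (and the Betti numbers of a monomial ideal are themselves not field-independent), so it cannot ``rule out'' cancellation in passing from $\LT_\sigma(J)$ down to $J$. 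Fortunately the theorem does not need equality of Betti tables. Freeness of $\A_p$ gives $\mathrm{pd}(S_{\mathbb{F}_p}/J(\A_p))=2$, hence by semicontinuity $\mathrm{pd}(S_{\mathbb{Q}}/J(\A))\le 2$; since $\codim J(\A)=2$ (any two distinct hyperplanes contribute a codimension-two component of the singular locus of $V(Q)$), Auslander--Buchsbaum forces $\mathrm{pd}(S_{\mathbb{Q}}/J(\A))=2$, so $S_\mathbb{Q}/J(\A)$ is Cohen--Macaulay of codimension $2$ and $\A$ is free by Terao's criterion. The exponents are then recovered from the Hilbert series of $S_\mathbb{Q}/J(\A)$, which you have shown equals that of $S_{\mathbb{F}_p}/J(\A_p)$, via the shape of the Hilbert--Burch resolution. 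With the last paragraph rewritten along these lines the argument is correct.
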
 

Putting together Theorems~\ref{theo:maintheosamecomb}, \ref {theo:fromchar0tocharP} and \ref {theo:fromcharPto0}, we can now show that the knowledge of Terao's conjecture in finite characteristic implies the conjecture over the rationals.

\begin{Theorem} 
If Terao's conjecture is true over all $\mathbb{F}_p$, then it is true over $\Q$.
\end{Theorem}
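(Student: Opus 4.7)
The plan is to take two central arrangements $\A, \A'$ in $\mathbb{Q}^l$ with $L(\A) \cong L(\A')$, assume $\A$ is free with exponents $(e_1,\dots, e_l)$, and deduce the freeness of $\A'$ under the assumption that Terao's conjecture holds over every $\mathbb{F}_p$. I would first reduce to the essential case (by quotienting out the center, which preserves both the lattice and freeness), and clear denominators so that the defining forms of $\A$ and $\A'$ lie in $\mathbb{Z}[x_1,\dots,x_l]$ with no common prime factor.

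The core of the argument is to choose a single prime $p$ that simultaneously satisfies four conditions: (i) $p$ is good and $(\sigma,l)$-lucky for $\A$, so Theorem~\ref{theo:maintheosamecomb} gives $\A \backsim \A_p$; (ii) $p$ is good and $(\sigma,l)$-lucky for $\A'$, so $\A' \backsim \A'_p$; (iii) $p$ belongs to the cofinite set of good primes for which Theorem~\ref{theo:fromchar0tocharP} guarantees that $\A_p$ is free over $\mathbb{F}_p$ with the same exponents $(e_1,\dots, e_l)$; and (iv) $p$ is $\sigma$-lucky for the Jacobian ideal $J(\A')_\mathbb{Z}$, which will allow the application of Theorem~\ref{theo:fromcharPto0} to transfer freeness of $\A'_p$ back to $\A'$. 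Each of these four conditions excludes only finitely many primes, by the discussion at the beginning of Section~4, by Remark~\ref{rem:finiteluckyprime}, and by Theorem~\ref{theo:fromchar0tocharP} itself, so infinitely many primes $p$ satisfy all of (i)--(iv).

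Once such a $p$ is fixed, the chain of implications is straightforward: by (i) and (ii) the lattices satisfy $L(\A_p) \cong L(\A) \cong L(\A') \cong L(\A'_p)$, so $\A_p$ and $\A'_p$ are two arrangements in $\mathbb{F}_p^l$ with isomorphic lattices of intersections. By (iii), $\A_p$ is free over $\mathbb{F}_p$ with exponents $(e_1,\dots, e_l)$, so Terao's conjecture over $\mathbb{F}_p$ forces $\A'_p$ to also be free with the same exponents. Finally, condition (iv) together with Theorem~\ref{theo:fromcharPto0} yields that $\A'$ is free over $\mathbb{Q}$ with exponents $(e_1,\dots, e_l)$, which is the desired conclusion.

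The main obstacle is purely bookkeeping: one must be careful that the finite exceptional set in each of (i)--(iv) is indeed finite and that the four sets can be combined without conflict. This is not serious, since each exceptional set is explicitly finite by the results already established, but it is the only place where something could go wrong in the transfer argument. A minor additional point is the reduction to essential central arrangements, which is standard but worth recording since Theorem~\ref{theo:maintheosamecomb} and Theorem~\ref{theo:fromcharPto0} are stated under these hypotheses.
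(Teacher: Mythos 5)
Your proposal is correct and follows essentially the same route as the paper: define the (infinite) set of primes that are simultaneously good and $(\sigma,l)$-lucky for both arrangements and $\sigma$-lucky for the Jacobian ideal of the second, then chain Theorem~\ref{theo:maintheosamecomb}, Theorem~\ref{theo:fromchar0tocharP}, Terao's conjecture over $\mathbb{F}_p$, and Theorem~\ref{theo:fromcharPto0}. The only difference is your explicit remark about reducing to the essential case, which the paper leaves implicit.
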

\begin{proof} Let $\A^{(1)}$ and $\A^{(2)}$ be two central arrangements in $\mathbb{Q}^l$ such that $\A^{(1)}\backsim\A^{(2)}$, and assume that $\A^{(1)}$ is free with exponents $(e_1,\dots, e_l)$.

Consider $\mathcal{P}$ the set of prime numbers that are good and $(\sigma,l)$-lucky for $\A^{(1)}$ and $\A^{(2)}$, and that are $\sigma$-lucky for $J(\A^{(2)})_\mathbb{Z}$. By the discussion at the beginning of Section 4 and Remark~\ref{rem:finiteluckyprime}, $\mathcal{P}$ is infinite.
For every $p\in\mathcal{P}$, Theorem~\ref{theo:maintheosamecomb} gives us  $(\A^{(1)})_p\backsim\A^{(1)}\backsim\A^{(2)}\backsim(\A^{(2)})_p$. On the other hand, by Theorem~\ref{theo:fromchar0tocharP}, we can chose $p\in\mathcal{P}$ in such way that $(\A^{(1)})_p$ is free with exponents $(e_1,\dots, e_l)$. If Terao's conjecture is true over $\mathbb{F}_p$, then $(\A^{(2)})_p$ is free with exponents $(e_1,\dots, e_l)$. Finally by definition of $\mathcal{P}$ and Theorem~\ref{theo:fromcharPto0}, $\A^{(2)}$ is free with exponents $(e_1,\dots, e_l)$.
\end{proof}

It is a natural question to ask if, under the hypothesis of Theorem~\ref{theo:fromcharPto0}, $\A$ and $\A_p$ are combinatorially equivalent. 
In all the examples we considered so far, we obtained a positive answer. This is because in all considered examples, if $p$ is $\sigma$-lucky for $J(\A)_\mathbb{Z}$, 
then it is $(\sigma,l)$-lucky for $\A$. However in general, the converse is not true.

\begin{Example}
Consider the arrangement $\A$ in $\mathbb{Q}^3$ with defining polynomial $Q(\A)=xyz(x+y)(x+2y+z)$. Now $2$ is the only prime that is not $(\sigma,3)$-lucky for $\A$.
On the other hand a direct computation shows that $2$, $3$ and $5$ are not $\sigma$-lucky for $J(\A)_\mathbb{Z}$. 
\end{Example}

\section{How to compute good primes via Gr\"obner bases}

We will now describe a method to compute good primes for an arrangement using minimal strong $\sigma$-Gr\"obner bases.

\begin{Lemma}\label{lemma:nongoognonlucky} Let $1\le i<j \le n$. If $(\alpha_i)_p=\beta(\alpha_j)_p$ for some $\beta\in\mathbb{F}_p\setminus\{0\}$, then $p$ is not  $\sigma$-lucky for the ideal $\ideal{\alpha_i,\alpha_j}_\mathbb{Z}\subseteq\mathbb{Z}[x_1,\dots,x_l].$
\end{Lemma}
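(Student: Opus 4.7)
The plan is to lift $\beta$ to some $\tilde\beta\in\mathbb{Z}$ and to consider the distinguished element $f := \alpha_i - \tilde\beta\alpha_j \in I_\mathbb{Z} := \ideal{\alpha_i,\alpha_j}_\mathbb{Z}$. Since $\pi_p(f) = (\alpha_i)_p - \beta(\alpha_j)_p = 0$ by hypothesis, every coefficient of $f$ is divisible by $p$, so $f = p\nu$ for a linear form $\nu\in\mathbb{Z}[x_1,\dots,x_l]$, and $\nu\neq 0$ because $\alpha_i$ and $\alpha_j$ are linearly independent over $\mathbb{Q}$ (as $H_i\neq H_j$). The crucial observation is that $\nu\notin I_\mathbb{Z}$: since $I_\mathbb{Z}$ is homogeneous and generated in degree one, a linear element of $I_\mathbb{Z}$ must be of the form $A\alpha_i + B\alpha_j$ with $A,B\in\mathbb{Z}$; substituting into $p\nu = \alpha_i - \tilde\beta\alpha_j$ and invoking the $\mathbb{Q}$-independence of $\alpha_i,\alpha_j$ would force $pA = 1$, which is impossible.

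The proof then proceeds by contradiction. Assuming $p$ is $\sigma$-lucky for $I_\mathbb{Z}$, fix a minimal strong $\sigma$-Gr\"obner basis $G = \{g_1,\dots,g_t\}$, so that $p\nmid \LC_\sigma(g_k)$ for every $k$. Starting from $f_0 := f = p\nu_0$ with $\nu_0 := \nu$, I would build inductively a sequence $f_k = p\nu_k \in I_\mathbb{Z}$ satisfying $\nu_k\notin I_\mathbb{Z}$: applying the Gr\"obner basis property to the nonzero $f_k$ yields some $g\in G$ with $\LM_\sigma(g)\mid \LM_\sigma(f_k) = p\LC_\sigma(\nu_k)\LT_\sigma(\nu_k)$; the luckiness assumption turns $\LC_\sigma(g)\mid p\LC_\sigma(\nu_k)$ into $\LC_\sigma(g)\mid \LC_\sigma(\nu_k)$, and the standard reduction $\nu_{k+1} := \nu_k - (\LC_\sigma(\nu_k)/\LC_\sigma(g))\cdot(\LT_\sigma(\nu_k)/\LT_\sigma(g))\cdot g$ then has integer coefficients. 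Since the subtracted term lies in $I_\mathbb{Z}$, the hypothesis $\nu_k\notin I_\mathbb{Z}$ propagates to $\nu_{k+1}$; in particular $\nu_{k+1}\neq 0$, $f_{k+1} = p\nu_{k+1}$ is a nonzero element of $I_\mathbb{Z}$, and $\LT_\sigma(f_{k+1}) <_\sigma \LT_\sigma(f_k)$. Iterating produces an infinite strictly $\sigma$-decreasing sequence of leading terms, contradicting the fact that $\sigma$ is a well-ordering.

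The main obstacle is precisely the verification that $\nu\notin I_\mathbb{Z}$ and that this non-membership is preserved through every reduction step; once these are in place the well-ordering argument is routine. Observe in particular that the argument never requires the reducer $g\in G$ to be linear (for a non-graded ordering $g$ might well carry higher-degree tail terms whose $\sigma$-leading term is smaller than $\LT_\sigma(g)$), since the reduction formula above is valid for any $g$ whose leading monomial divides $\LM_\sigma(f_k)$, and the descent depends only on the strict decrease of $\LT_\sigma(f_k)$.
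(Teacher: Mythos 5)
Your proof is correct, but it takes a genuinely different route from the paper's. The paper exhibits a concrete minimal strong $\sigma$-Gr\"obner basis of $\ideal{\alpha_i,\alpha_j}_\mathbb{Z}$ consisting of two \emph{linear} forms $g_1,g_2$ with distinct leading variables $x_{i_1}\ne x_{i_2}$, reduces everything modulo $p$, and derives the contradiction from the fact that $\LT_\sigma(\ideal{(\alpha_i)_p,(\alpha_j)_p})$ is the principal ideal $\ideal{x_r}$ (the ideal mod $p$ being generated by a single linear form) and so cannot contain two distinct variables. You instead never touch the structure of the Gr\"obner basis: you manufacture a $p$-torsion element of $\mathbb{Z}[x_1,\dots,x_l]/\ideal{\alpha_i,\alpha_j}_\mathbb{Z}$ --- namely $\nu$ with $p\nu=\alpha_i-\tilde\beta\alpha_j\in I_\mathbb{Z}$ but $\nu\notin I_\mathbb{Z}$, the non-membership correctly justified via the graded decomposition and the $\mathbb{Q}$-independence of $\alpha_i,\alpha_j$ --- and then show by the division/infinite-descent argument that a $\sigma$-lucky prime must be a non-zero-divisor on $\mathbb{Z}[x_1,\dots,x_l]/I_\mathbb{Z}$, since otherwise the reduction of $f_k=p\nu_k$ never terminates while the leading terms strictly decrease, contradicting that $\sigma$ is a well-ordering. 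All the steps check out: the luckiness hypothesis is exactly what converts $\LC_\sigma(g)\mid p\LC_\sigma(\nu_k)$ into $\LC_\sigma(g)\mid\LC_\sigma(\nu_k)$, and non-membership in $I_\mathbb{Z}$ propagates because each correction term lies in $I_\mathbb{Z}$. What your approach buys is generality and self-containedness: you in effect prove the general statement that a $\sigma$-lucky prime for any ideal $I\subseteq\mathbb{Z}[x_1,\dots,x_l]$ is a non-zero-divisor on $\mathbb{Z}[x_1,\dots,x_l]/I$ (a strengthening of the remark the paper makes only for $\mathbb{Z}[x_1,\dots,x_l]/\LM_\sigma(I)$), and you avoid having to justify the existence of a linear minimal strong Gr\"obner basis with distinct leading variables, which the paper asserts without proof. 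The price is a longer argument where the paper's is a three-line contradiction.
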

\begin{proof} By construction $\alpha_i$ and $\alpha_j$ are distinct homogenous polynomials of degree $1$, that are not one multiple of the other. 
This implies that there exist $g_1, g_2\in\mathbb{Z}[x_1,\dots,x_l]$ two homogenous polynomials of degree $1$ that form a minimal strong $\sigma$-Gr\"obner basis for $\ideal{\alpha_i,\alpha_j}_\mathbb{Z}$. Notice that in this situation $\LM_\sigma(g_k)=\lambda_kx_{i_k}$ for $k=1,2$ with $x_{i_1}\ne x_{i_2}$.

Assume by absurd that $(\alpha_i)_p=\beta(\alpha_j)_p$ for some $\beta\in\mathbb{F}_p\setminus\{0\}$, but $p$ is $\sigma$-lucky for $\ideal{\alpha_i,\alpha_j}_\mathbb{Z}.$
In this situation $\LT_\sigma(\ideal{(\alpha_i)_p, (\alpha_j)_p})=\ideal{x_r}$ for some $1\le r\le l$. 
On the other hand, since $p$ is $\sigma$-lucky for $\ideal{\alpha_i,\alpha_j}_\mathbb{Z}$, we have $\LM_\sigma(\pi_p(g_k))=\pi_p(\LM_\sigma(g_k))=\pi_p(\lambda_k)x_{i_k}\ne0$, for $k=1,2$. This implies that $0\ne\pi_p(g_k)\in\ideal{(\alpha_i)_p, (\alpha_j)_p}$ for $k=1,2$, and hence that $x_{i_1},x_{i_2}\in \LT_\sigma(\ideal{(\alpha_i)_p, (\alpha_j)_p})=\ideal{x_r}$. However this is impossible.
\end{proof}

In general, the converse of Lemma~\ref{lemma:nongoognonlucky} does not hold.
\begin{Example} Consider $\alpha_1=x+y$ and $\alpha_2=x+3y+z$. Then a direct computation shows $\{x+y, 2y+z\}$ is a minimal strong $\sigma$-Gr\"obner basis for the ideal $\ideal{\alpha_1,\alpha_2}_\mathbb{Z}$, and hence $p=2$ is not a $\sigma$-lucky prime. However, $(\alpha_1)_2=x+y$ and $(\alpha_2)_2=x+y+z$ are not one multiple of the other.
\end{Example}

%

We can now show that in order to compute the good primes, it is enough to compute the $(\sigma,2)$-lucky ones.

\begin{Theorem}\label{theo:2luckyisgood} If $p$ is $(\sigma,2)$-lucky for $\A$, then $p$ is good for $\A$.
\end{Theorem}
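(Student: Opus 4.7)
The plan is to read off the conclusion as a packaging of the contrapositive of Lemma~\ref{lemma:nongoognonlucky}, once I verify that the $(\sigma,2)$-lucky hypothesis covers every pair of hyperplanes of $\A$.

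First I would record a small geometric observation: for any $1\le i<j\le n$ the linear forms $\alpha_i,\alpha_j$ are linearly independent over $\Q$. Indeed, the hyperplanes of $\A$ are pairwise distinct, and since $\A$ is central the forms $\alpha_i,\alpha_j$ are homogeneous of degree one, so linear dependence would force $H_i=H_j$, contradicting the standing assumption. Consequently $\codim(H_i\cap H_j)=2$ for all such pairs, and therefore the hypothesis that $p$ is $(\sigma,2)$-lucky for $\A$ applies to every two-element subset of $\A$: for all $1\le i<j\le n$, $p$ is $\sigma$-lucky for the ideal $\ideal{\alpha_i,\alpha_j}_\mathbb{Z}$.

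Next I would invoke the contrapositive of Lemma~\ref{lemma:nongoognonlucky}. That lemma states that whenever $(\alpha_i)_p=\beta(\alpha_j)_p$ for some nonzero $\beta\in\mathbb{F}_p$, the prime $p$ fails to be $\sigma$-lucky for $\ideal{\alpha_i,\alpha_j}_\mathbb{Z}$. Combining this with the previous paragraph, for all $1\le i<j\le n$ the reductions $(\alpha_i)_p$ and $(\alpha_j)_p$ are not scalar multiples of each other. Here one also uses the standing reduction that no prime divides any $\alpha_i$, which ensures that each $(\alpha_i)_p$ is a nonzero degree-one polynomial; hence $\pi_p(Q(\A))=\prod_{i=1}^n(\alpha_i)_p$ is a product of pairwise non-proportional linear forms, i.e., it is reduced. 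By definition this means $p$ is good for $\A$.

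I do not expect a genuine obstacle: the only place where one must be careful is to make sure the codimension-two hypothesis in Definition~\ref{def:l-lucky} is automatically met for every pair, which the centrality and distinctness of the hyperplanes guarantee. Once this is observed, the statement is essentially the pairwise contrapositive of Lemma~\ref{lemma:nongoognonlucky}.
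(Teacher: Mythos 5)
Your proof is correct and follows essentially the same route as the paper: apply the contrapositive of Lemma~\ref{lemma:nongoognonlucky} to every pair $1\le i<j\le n$ and conclude that $\pi_p(Q(\A))$ is reduced. The only difference is that you make explicit the (true but implicit in the paper) observation that centrality and distinctness of the hyperplanes force $\codim(H_i\cap H_j)=2$ for every pair, so the $(\sigma,2)$-lucky hypothesis indeed applies to all pairs; this is a worthwhile clarification but not a different argument.
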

\begin{proof} By definition, if $p$ is $(\sigma,2)$-lucky for $\A$, then $p$ is $\sigma$-lucky for all the ideals of the form $\ideal{\alpha_i,\alpha_j}_\mathbb{Z}$ for all pairs $1\le i<j \le n$. By Lemma~\ref{lemma:nongoognonlucky}, $(\alpha_i)_p$ and $(\alpha_j)_p$ are not one multiple of the other for all pairs $1\le i<j \le n$. Hence, $\pi_p(Q(\A))$ is reduced.
\end{proof}

In general the statement of Theorem~\ref{theo:2luckyisgood} is not an equivalence.

\begin{Example} Consider the arrangement $\A$ in $\mathbb{Q}^3$ with defining polynomial $Q(\A)=xy(x+y)(x+3y+z)$. Then a direct computation shows that $p=2$ and $p=3$ are not $(\sigma,2)$-lucky for $\A$. However, all prime numbers are good for $\A$.
\end{Example}

\section{On the period of arrangements}

Let  $\A=\{H_1,\dots, H_n\}$ be a central and essential arrangement in $\mathbb{Q}^l$, with $\alpha_i \in\mathbb{Z}[x_1,\dots, x_l]$ for all $i=1,\dots, n$. Moreover, assume that there exists no prime number $p$ that divides any $\alpha_i$. We can associate to $\A$ a $l\times n$ integer matrix
$$C=(c_1,\dots,c_n)\in\Mat_{l\times n}(\mathbb{Z}) $$
consisting of column vectors $c_i=(c_{1i},\dots,c_{li})^T\in\mathbb{Z}^l$, for $i=1,\dots,n$, such that
$$\alpha_i=\sum_{k=1}^lc_{ki}x_k. $$
Similarly, for each non-empty $J=\{i_1,\dots,i_k\}\subseteq[n]$, we consider the $l\times k$ integer matrix
$$C_J=(c_{i_1},\dots,c_{i_k})\in\Mat_{l\times k}(\mathbb{Z}).$$

For each prime number $p$, we can consider $(C)_p$ and $(C_J)_p$ the reductions of $C$ and $C_J$, respectively, modulo $p$.
Notice that $(C)_p$ is the matrix associated to the arrangement $\A_p$.

Since each $C_J$ is an integer matrix, we can consider its Smith normal form. In particular, there exist two unimodular matrices $S_J\in\Mat_{l\times l}(\mathbb{Z})$ and $T_J\in\Mat_{k\times k}(\mathbb{Z})$ such that 
$$S_JC_JT_J=\left ( \begin{array}{cc}
E_J & O \\
O & O \\
\end{array} \right ),$$
where $E_J$ is the diagonal matrix $\diag(e_{J,1},\dots, e_{J,r})$, with $e_{J,1},\dots, e_{J,r}\in\mathbb{Z}_{>0}$, $e_{J,1}|e_{J,2}|\dots|e_{J,r}$ and $r=\rk(C_J)$. 
Denote $e_{J,r}$ simply by $e(J)$, and let the \textbf{$\lcm$-period} of $\A$ be
$$\rho_0=\lcm\{e(J)~|~J\subseteq[n], 1\le|J|\le l\}. $$
In \cite[Theorem 2.4]{kamiya2008periodicity}, the authors proved the following result.

\begin{Theorem} The function $|M(\A_q )|=|\mathbb{Z}^l_q\setminus\bigcup_{H\in\A_q}H|$ is a monic quasi-polynomial in $q\in\mathbb{Z}_{>0}$ of degree $l$ with a period $\rho_0$, where $\A_q$ is the reduction of $\A$ modulo $q$. 
\end{Theorem}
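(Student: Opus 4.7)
The plan is to derive the stated quasi-polynomial structure from an inclusion-exclusion decomposition of the complement and to control the periodicity of each piece via the Smith normal form of the corresponding submatrix $C_J$.

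The first step is to write the inclusion-exclusion identity
\[
|M(\A_q)| = \sum_{J\subseteq[n]} (-1)^{|J|}\, |V_J(q)|,
\]
where $V_J(q) = \{x\in\mathbb{Z}_q^l \,:\, \alpha_i(x)\equiv 0 \pmod q \text{ for all } i\in J\}$ and $V_\emptyset(q)=\mathbb{Z}_q^l$. The second step is to compute each $|V_J(q)|$ by replacing $C_J$ with its Smith normal form. Since $S_J$ and $T_J$ are unimodular, their reductions modulo $q$ are automorphisms of $\mathbb{Z}_q^l$ and $\mathbb{Z}_q^{|J|}$; the system $C_J^T x \equiv 0 \pmod q$ therefore decouples into $r=\rk(C_J)$ one-variable congruences $e_{J,i}\, y_i \equiv 0 \pmod q$ plus $l-r$ free variables, giving
\[
|V_J(q)| = q^{\,l-\rk(C_J)}\prod_{i=1}^{\rk(C_J)} \gcd(e_{J,i},\,q).
\]
Each factor $\gcd(e_{J,i},q)$ depends only on $q \bmod e_{J,i}$, hence $|V_J(q)|$ is a quasi-polynomial in $q$ of degree $l-\rk(C_J)$ whose period divides $\lcm(e_{J,1},\ldots,e_{J,\rk(C_J)}) = e_{J,\rk(C_J)} = e(J)$.

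The third step extracts the leading behaviour: only the term $J=\emptyset$ contributes the monomial $q^l$, while every other term has degree at most $l-1$. Thus $|M(\A_q)|$ is automatically a monic quasi-polynomial of degree $l$, and its period divides $\lcm_{J\subseteq[n]}\, e(J)$.

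The final and most delicate step is to sharpen this last bound to the equality with $\rho_0$, i.e.\ to restrict the $\lcm$ to the range $1\le |J|\le l$. The inclusion $\rho_0 \,\big|\, \lcm_{J\subseteq[n]} e(J)$ is trivial, so the content is showing $e(J) \,\big|\, \rho_0$ whenever $|J|>l$. This is the expected obstacle. The plan is to use the determinantal characterization $d_k(C_J) = \gcd\{d_k(C_{J'}) \,:\, J'\subseteq J,\, |J'|=k\}$: since $r=\rk(C_J)\le l$, the invariant factor $e(J)=d_r(C_J)/d_{r-1}(C_J)$ is controlled by the subsystems $C_{J'}$ with $|J'|\le l$, from which $e(J)\mid \rho_0$ follows. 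Combining with the previous step yields the period $\rho_0$.
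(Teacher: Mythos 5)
This theorem is not proved in the paper: it is quoted verbatim from \cite{kamiya2008periodicity} (their Theorem 2.4). Your argument is essentially the proof given there: inclusion--exclusion over all $J\subseteq[n]$, followed by diagonalizing each system $C_J^{T}x\equiv 0 \pmod q$ via the Smith normal form to obtain $|V_J(q)|=q^{\,l-r}\prod_{i=1}^{r}\gcd(e_{J,i},q)$, a quasi-polynomial of degree $l-\rk(C_J)$ whose period divides $e(J)$ because $e_{J,1}\mid\cdots\mid e_{J,r}$. Steps 1--3 are correct and complete (the unimodularity of $S_J,T_J$ modulo $q$ and the fact that only $J=\emptyset$ contributes $q^l$ are exactly the right points).

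The one place where your write-up has a genuine, though fixable, gap is the final step. From $d_k(C_J)=\gcd\{d_k(C_{J'}) : J'\subseteq J,\ |J'|=k\}$ it does \emph{not} follow formally that the ratio $e(J)=d_r(C_J)/d_{r-1}(C_J)$ divides $\lcm_{J'}\bigl(d_r(C_{J'})/d_{r-1}(C_{J'})\bigr)$: a gcd of numerators over a gcd of denominators need not divide the lcm of the individual ratios, since the subsets minimizing the two determinantal divisors may differ, and some size-$r$ subsets may even have $d_r(C_{J'})=0$. To close this, fix a prime $p$ and let $v_p$ be the $p$-adic valuation; choose $J''\subseteq J$ with $|J''|=r-1$ attaining $v_p(d_{r-1}(C_{J''}))=v_p(d_{r-1}(C_J))$ (so $\rk(C_{J''})=r-1$), and extend it by one column of $C_J$ outside the span of the columns of $C_{J''}$ to a subset $J'$ with $|J'|=\rk(C_{J'})=r\le l$. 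Then $v_p(d_r(C_{J'}))\ge v_p(d_r(C_J))$ and $v_p(d_{r-1}(C_{J'}))\le v_p(d_{r-1}(C_{J''}))=v_p(d_{r-1}(C_J))$, whence $v_p(e(J'))\ge v_p(e(J))$. Running over all primes gives $e(J)\mid\rho_0$ for every $J$ with $|J|>l$, which is precisely what your last step asserts. With this supplement the proof is complete.
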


In \cite{kamiya2008periodicity}, the authors also defined
$$q_0=\max_{\emptyset\ne J\subseteq[n]}\min_{S_J}\max\{|u|~|~u \text{ is an entry of } S_JC_J \text{ or } C_J\}$$
and obtained the following result in Corollary 3.3
\begin{Theorem}\label{theo:teraoisolattice} The lattice of intersections $L_q=L(\A_q)$ is periodic in $q>q_0$ with period $\rho_0$. In other words,
$$L_{q+s\rho_0}\simeq L_q,$$
for all $q>q_0$ and $s\in\mathbb{Z}_{\ge0}$.
\end{Theorem}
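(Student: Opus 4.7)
The plan is to produce an explicit isomorphism $\Phi: L_q \to L_{q+s\rho_0}$ via the natural assignment $\bigcap_{i \in J}(H_i)_q \mapsto \bigcap_{i \in J}(H_i)_{q+s\rho_0}$ for each $J \subseteq [n]$ with $|J| \le l$. The lattice structure of $L_q$ is entirely encoded by the closure operator $J \mapsto \bar{J}_q := \{i \in [n] : c_i \in M_J^q\}$, where $M_J^q$ denotes the $\mathbb{Z}_q$-span of the columns of $C_J$ inside $\mathbb{Z}_q^l$, since $V_{J_1}^q \subseteq V_{J_2}^q$ (with $V_J^q := \bigcap_{i\in J}(H_i)_q$) is equivalent to $J_2 \subseteq \bar{J_1}_q$ by the perfect duality of the standard pairing on $\mathbb{Z}_q^l$. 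The whole problem therefore reduces to showing $\bar{J}_q = \bar{J}_{q+s\rho_0}$ as subsets of $[n]$ for every such $J$.

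Next, I would use the Smith normal form $S_J C_J T_J = \diag(e_{J,1}, \dots, e_{J,r_J}, 0, \dots, 0)$ to translate membership $c_i \in M_J^q$ into explicit divisibility conditions. Setting $v := S_J c_i$, solvability of $C_J a \equiv c_i \pmod{q}$ in $\mathbb{Z}^{|J|}$ is equivalent to $\gcd(e_{J,k}, q) \mid v_k$ for $k = 1, \dots, r_J$ together with $q \mid v_k$ for $k > r_J$. For the first family, $e_{J,k} \mid e(J) \mid \rho_0$ yields $\gcd(e_{J,k}, q) = \gcd(e_{J,k}, q+s\rho_0)$, and the divisibility is controlled by $v_k \pmod{e_{J,k}}$, which does not involve $q$. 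For the second family, the hypothesis $q > q_0$ guarantees, via a minimizing choice of $S_J$, that the relevant entries $v_k$ have absolute value bounded by $q_0$, so $q \mid v_k$ (and $(q+s\rho_0) \mid v_k$) collapses to the modulus-free condition $v_k = 0$. Combining these two observations, $\bar{J}_q$ is unchanged when $q$ is replaced by $q + s\rho_0$, and $\Phi$ becomes a well-defined, order-preserving bijection.

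The main obstacle is controlling $v_k = (S_J c_i)_k$ when $i \notin J$: the definition of $q_0$ bounds entries of $S_J C_J$ and of $C_J$, but $c_i$ is not in general a column of $C_J$. Overcoming this requires either enlarging $J$ to $J \cup \{i\}$ and applying the Smith form of $C_{J \cup \{i\}}$ (which exchanges the family of invariants in play and forces one to compare the structures of $M_J^q$ and $M_{J\cup\{i\}}^q$ indirectly), or showing that a judicious choice of $S_J$ keeps all the products $S_J c_i$ small simultaneously. This delicate step is precisely what motivates the particular form of $q_0$ as the periodicity threshold, and it is the place where the argument must be carried out with the greatest care.
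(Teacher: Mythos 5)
First, a contextual note: the paper does not prove this statement at all --- it is imported verbatim from Kamiya--Takemura--Terao \cite{kamiya2008periodicity} (their Corollary 3.3), so there is no in-paper proof to compare against. Judged on its own terms, your proposal sets up the right framework: reducing the lattice isomorphism to the periodicity of the closure operator $J \mapsto \bar{J}_q$, translating $c_i \in M_J^q$ via the Smith normal form into the conditions $\gcd(e_{J,k},q) \mid v_k$ for $k \le r_J$ and $q \mid v_k$ for $k > r_J$ with $v = S_J c_i$, and correctly disposing of the first family using $e_{J,k} \mid \rho_0$. But the argument is not complete, and you say so yourself: the entire content of the threshold $q > q_0$ lives in the second family, where you need $|v_k| \le q_0$ for $i \notin J$, and the definition of $q_0$ only bounds the entries of $S_J C_J$ and of $C_J$, not of $S_J c_i$ for $i$ outside $J$. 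Declaring this "the place where the argument must be carried out with the greatest care" leaves the one step that actually uses the hypothesis $q > q_0$ unproved, so as it stands this is a genuine gap, not a proof.

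The standard way to close it is essentially your option (a), carried out via cardinalities rather than membership tests: since $M_J^q \subseteq M_{J\cup\{i\}}^q$ are finite modules, $c_i \in M_J^q$ if and only if $|M_J^q| = |M_{J\cup\{i\}}^q|$ (equivalently $|V_J^q| = |V_{J\cup\{i\}}^q|$, with $|V_J^q| = q^{\,l-r_J}\prod_k \gcd(e_{J,k},q)$). This replaces the uncontrolled vector $S_J c_i$ by the elementary divisors of $C_{J\cup\{i\}}$, all of which divide $\rho_0$ and hence have $q$-periodic gcd's. When $\rk C_{J\cup\{i\}} = \rk C_J$ the resulting condition is periodic in $q$ with period $\rho_0$ with no lower bound on $q$ needed; when the rank jumps, the condition forces $q \le \prod_k e_{J\cup\{i\},k}$, so it fails identically once $q$ exceeds an explicit bound depending only on the determinantal divisors of the matrices $C_{J'}$ --- and one must then verify (this is the remaining bookkeeping) that this bound is dominated by the specific $q_0$ of Kamiya--Takemura--Terao. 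Until either that comparison or a direct bound on $(S_J c_i)_k$ for $k > r_J$ is supplied, the proof is incomplete.
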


As noted in \cite{ardila2007computing}, if $p$ is a large prime number, then $\A$ and $\A_p$ are combinatorially equivalent.
Putting together this fact and Theorem~\ref{theo:teraoisolattice}, we get the following result. 

\begin{Corollary}\label{cor:teraoisolattice} Let $p$ be a prime number such that $p>q_0$ and $p$ is coprime with $\rho_0$. Then $\A$ and $\A_p$ are combinatorially equivalent.
\end{Corollary}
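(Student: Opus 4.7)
The plan is to bridge between $\A_p$ and a sufficiently large prime reduction of $\A$ via the periodicity Theorem~\ref{theo:teraoisolattice}. Concretely, since $\gcd(p, \rho_0) = 1$, Dirichlet's theorem on primes in arithmetic progressions yields infinitely many primes of the form $p + s\rho_0$ with $s \in \mathbb{Z}_{\ge 0}$. I would therefore choose such a prime $p'$ large enough that Ardila's observation (recalled just before the statement) applies, giving $\A \backsim \A_{p'}$.

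Now $p > q_0$ and $p' = p + s\rho_0$ for some $s \in \mathbb{Z}_{\ge 0}$, so Theorem~\ref{theo:teraoisolattice} yields $L(\A_{p'}) \simeq L(\A_p)$, the isomorphism being natural with respect to the hyperplane labeling since it arises from Smith normal forms applied to the matrices $C_J$ for fixed subsets $J \subseteq [n]$. Composing with $\A \backsim \A_{p'}$ (i.e.\ equality of dimensions of indexed intersections between $\A$ and $\A_{p'}$) then produces, for every $J = \{i_1, \dots, i_k\} \subseteq [n]$,
$$\dim(H_{i_1} \cap \cdots \cap H_{i_k}) = \dim((H_{i_1})_{p'} \cap \cdots \cap (H_{i_k})_{p'}) = \dim((H_{i_1})_p \cap \cdots \cap (H_{i_k})_p),$$
which is precisely the definition of $\A \backsim \A_p$.

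The only delicate ingredient is Dirichlet's theorem, which is indispensable: without the coprimality assumption, the arithmetic progression $\{p + s\rho_0 : s \in \mathbb{Z}_{\ge 0}\}$ could contain only one prime (namely $p$ itself, if $p \mid \rho_0$), and the bridge would collapse before one could reach the regime where Ardila's observation applies. A secondary point is verifying that the lattice isomorphism in Theorem~\ref{theo:teraoisolattice} respects the labeling of hyperplanes, which is transparent from the matrix-theoretic formulation of the periodicity in \cite{kamiya2008periodicity}.
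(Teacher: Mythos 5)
Your argument is correct and is precisely the one the paper intends: the corollary is presented as an immediate consequence of Ardila's large-prime observation together with the periodicity of Theorem~\ref{theo:teraoisolattice}, and your use of Dirichlet to produce a large prime $p'\equiv p \pmod{\rho_0}$ is exactly the bridge that makes this combination work (and explains why coprimality with $\rho_0$ is assumed). The only point to verify against \cite{kamiya2008periodicity}, as you yourself note, is that the isomorphism $L_{q+s\rho_0}\simeq L_q$ respects the labeling of hyperplanes; it does, since it is read off from the ranks of the matrices $C_J$ for each fixed $J\subseteq[n]$.
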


The rest of this section is devoted to show that the hypothesis $p>q_0$ is not necessary. Specifically, we will show that computing the good and $(\sigma,l)$-lucky primes for $\A$
is equivalent to computing all the prime numbers that divide $\rho_0$.

\begin{Proposition}\label{prop:nongoodtoperiod} If $p$ is non-good for $\A$, then $p$ divides $\rho_0$.
\end{Proposition}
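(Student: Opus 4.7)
The plan is to exhibit a set $J\subseteq[n]$ with $1\le|J|\le l$ for which $p$ divides $e(J)$, so that $p\mid\rho_0$ follows immediately from the definition of the $\lcm$-period. The natural candidate is a two-element $J=\{i,j\}$ detected by the failure of $p$ being good.

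First I would unravel the definition: $p$ non-good for $\A$ means that $\pi_p(Q(\A))$ is not reduced, hence there exist indices $1\le i<j\le n$ and $\beta\in\mathbb{F}_p\setminus\{0\}$ with $(\alpha_i)_p=\beta(\alpha_j)_p$. In terms of the coefficient matrix $C=(c_1,\dots,c_n)$, this translates to the column relation $c_i\equiv\beta c_j\pmod{p}$. Set $J=\{i,j\}$; since $H_i\ne H_j$ in $\mathbb{Q}^l$, the vectors $c_i,c_j$ are $\mathbb{Q}$-linearly independent, so $\rk(C_J)=2$ and the Smith normal form of $C_J$ has two positive elementary divisors $e_{J,1}\mid e_{J,2}=e(J)$. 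Note also $|J|=2\le l$, since $l=1$ would force a central essential arrangement of a single hyperplane, making non-goodness vacuous.

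Next I would compute $e(J)$ via the standard formula $e_{J,k}=d_k(C_J)/d_{k-1}(C_J)$, where $d_k(C_J)$ is the $\gcd$ of the $k\times k$ minors of $C_J$ (with $d_0=1$). The running assumption that no prime divides all coefficients of any single $\alpha_i$ gives $\gcd(\text{entries of }c_i)=1$, so $d_1(C_J)=1$ and consequently $e(J)=e_{J,2}=d_2(C_J)$. Then the congruence $c_i\equiv\beta c_j\pmod p$ makes every $2\times 2$ minor
\[
c_{ki}c_{lj}-c_{li}c_{kj}\;\equiv\;\beta c_{kj}c_{lj}-\beta c_{lj}c_{kj}\;=\;0\pmod{p},
\]
so $p\mid d_2(C_J)=e(J)$.

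Finally, since $J\subseteq[n]$ with $1\le|J|=2\le l$, the integer $e(J)$ appears among the terms of the $\lcm$ defining $\rho_0$, whence $p\mid\rho_0$. The only mildly delicate point, rather than a genuine obstacle, is justifying $d_1(C_J)=1$ from the normalization that no prime divides any single $\alpha_i$; everything else reduces to an elementary Smith-normal-form calculation together with a minor computation modulo $p$.
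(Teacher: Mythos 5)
Your proof is correct and follows essentially the same route as the paper's: both take $J=\{i,j\}$ from the proportionality $(\alpha_i)_p=\beta(\alpha_j)_p$ and deduce $p\mid e(J)$ from the rank drop of $C_J$ modulo $p$. The only difference is cosmetic --- you compute $e(J)$ via the determinantal-divisor formula $e_{J,2}=d_2(C_J)/d_1(C_J)$ and check that all $2\times 2$ minors vanish mod $p$, whereas the paper phrases the same fact through the compatibility of the Smith normal form with reduction modulo $p$; your explicit handling of $d_1(C_J)=1$ and of the degenerate case $l=1$ is a welcome extra.
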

\begin{proof} Assume $p$ is non-good for $\A$. This implies that there exist a pair of indices $1\le i< j\le n$ such that
\begin{equation}\label{eq:rankdrop}(\alpha_i)_p=\beta(\alpha_j)_p,\end{equation} 
for some $\beta\in\mathbb{F}_p\setminus\{0\}$. Consider now $J=\{i,j\}\subseteq[n]$. Since $\A$ is central, then \eqref{eq:rankdrop} is equivalent to the fact that
$C_J$ has rank $2$ but $(C_J)_p$ has rank $1$. In particular, we have that the Smith normal form of $C_J$ is of the form
$$E_{i,j}=\left ( \begin{array}{c}
E_J  \\
O  \\
\end{array} \right ),$$
where $E_J=\diag(e_1, e(J))$.
By definition, a matrix and its Smith normal form have the same rank. On the other hand the Smith normal form of $(C_J)_p$, up to transforming the elements on the main diagonal to $1$, is $(E_{i,j})_p$ the reduction modulo $p$ of $E_{i,j}$. This implies that $\rk((E_{i,j})_p)=\rk((C_J)_p)=1.$
As a consequence, $p$ divides $e(J)$ and hence $\rho_0$.
\end{proof}

\begin{Proposition}\label{prop:nonlluckytoperiod} If $p$ is not $(\sigma,l)$-lucky for $\A$, then $p$ divides $\rho_0$.
\end{Proposition}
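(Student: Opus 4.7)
The plan is to trace the obstruction to $\sigma$-luckiness back to a factor of $|\det(C_J)|$ and then read off $p\mid e(J)$ from the Smith normal form of $C_J$.

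First I would unpack the hypothesis. Because $p$ is not $(\sigma,l)$-lucky for $\A$, Definition~\ref{def:l-lucky} produces some $J=\{i_1,\dots,i_l\}\in\mathfrak{I}(\A)$ for which $p$ is not $\sigma$-lucky for $I_\mathbb{Z}=\ideal{\alpha_{i_1},\dots,\alpha_{i_l}}_\mathbb{Z}$. The condition $J\in\mathfrak{I}(\A)$ forces $\alpha_{i_1},\dots,\alpha_{i_l}$ to be $\mathbb{Q}$-linearly independent, so $C_J\in\Mat_{l\times l}(\mathbb{Z})$ has nonzero determinant and its Smith normal form is $\diag(e_{J,1},\dots,e_{J,l})$, with $e_{J,1}\mid\cdots\mid e_{J,l}=e(J)$ and $|\det(C_J)|=e_{J,1}\cdots e_{J,l}$.

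Next, exactly as in the proof of Theorem~\ref{theo:maintheosamecomb}, I would pick a minimal strong $\sigma$-Gr\"obner basis $\{g_1,\dots,g_l\}$ of $I_\mathbb{Z}$ with $\LM_\sigma(g_i)=\lambda_i x_i$, $\lambda_i\in\mathbb{Z}_{>0}$. Because $I_\mathbb{Z}$ is homogeneous and generated in degree~$1$, each $g_i$ is itself a linear form, and the degree-$1$ component of $I_\mathbb{Z}$ equals the $\mathbb{Z}$-span of the $\alpha_{i_j}$'s inside $\mathbb{Z}^l$, a sublattice of index $|\det(C_J)|$. Since the $g_i$'s are linear elements of $I_\mathbb{Z}$ that still generate $I_\mathbb{Z}$ as an ideal, they $\mathbb{Z}$-generate this same sublattice; ordering the variables so that $\LM_\sigma(g_i)=\lambda_i x_i$ puts them in upper-triangular form with diagonal $\lambda_1,\dots,\lambda_l$. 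Comparing the indices of these two $\mathbb{Z}$-bases of the sublattice yields the key identity
\[ \lambda_1\cdots\lambda_l \;=\; |\det(C_J)| \;=\; e_{J,1}\cdots e_{J,l}. \]

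To conclude: by Lemma~\ref{lem:sameLMandLC} the set $\{\lambda_1,\dots,\lambda_l\}$ depends only on $I_\mathbb{Z}$, and since $p$ is not $\sigma$-lucky we must have $p\mid\lambda_i$ for some $i$; hence $p$ divides $\prod_j e_{J,j}$, and because $p$ is prime with $e_{J,1}\mid\cdots\mid e_{J,l}$ this forces $p\mid e_{J,l}=e(J)$. Since $e(J)\mid\rho_0$ by definition of the $\lcm$-period, we conclude $p\mid\rho_0$.

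The main hurdle is justifying that the minimal strong $\sigma$-Gr\"obner basis of $I_\mathbb{Z}$ can indeed be chosen to consist of exactly $l$ linear elements with $\LM_\sigma(g_i)=\lambda_i x_i$, as invoked in the proof of Theorem~\ref{theo:maintheosamecomb}. This is essentially Hermite-normal-form-style integer row reduction of the coefficient matrix of the $\alpha_{i_j}$'s; the output is a triangular $\mathbb{Z}$-basis of the lattice of linear forms in $I_\mathbb{Z}$, and the relation $\prod_i \lambda_i = |\det(C_J)|$ then expresses the fact that any two $\mathbb{Z}$-bases of that rank-$l$ sublattice are related by a unimodular change of basis.
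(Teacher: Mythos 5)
Your argument is correct and follows essentially the same route as the paper: locate $J\in\mathfrak{I}(\A)$ witnessing the failure of $(\sigma,l)$-luckiness, produce a triangular minimal strong $\sigma$-Gr\"obner basis of $\ideal{\alpha_{i_1},\dots,\alpha_{i_l}}_\mathbb{Z}$ consisting of linear forms, identify the product of its leading coefficients with $|\det(C_J)|=\prod_i e_{J,i}$, and use the divisibility chain $e_{J,1}\mid\cdots\mid e_{J,l}$ to force $p\mid e(J)\mid\rho_0$. The paper realizes your ``key identity'' by column-reducing $C_J$ with a unimodular $T_J$ (so $\det(C_JT_J)=\pm\det(E_J)$) rather than by comparing lattice indices, which is the same computation in different language, and it handles your ``main hurdle'' in exactly the Hermite-normal-form way you sketch.
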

\begin{proof} Let $p$ be a non $(\sigma,l)$-lucky prime number for $\A$. This implies that there exists $J=\{i_1,\dots,i_l\}\in\mathfrak{I}(\A)$ such that $p$ divides a leading coefficient
in a minimal strong $\sigma$-Gr\"obner basis of the ideal $\ideal{\alpha_{i_1},\dots, \alpha_{i_l}}_\mathbb{Z}$. Since $J\in\mathfrak{I}(\A)$, we have that $C_J$ is a $l\times l$ integer matrix of rank $l$. 
Using the same strategy as when computing the Smith normal form of $C_J$, there exists a unimodular $l\times l$ matrix $T_J$ such that $C_JT_J$ is lower triangular. Since $\rk(C_JT_J)=\rk(C_J)=l$, $C_JT_J$ has only non-zero elements on the main diagonal. Seeing that multiplying $C_J$ on the right by $T_J$ is equivalent to perform only column operations on $C_J$, we have that the columns of $C_JT_J$ represent a minimal strong $\sigma$-Gr\"obner basis of $\ideal{\alpha_{i_1},\dots, \alpha_{i_l}}_\mathbb{Z}$. This implies that $p$ divides one of the elements on the main diagonal of $C_JT_J$, and hence its determinant.
On the other hand, by construction, $C_J$ and $C_JT_J$ have the same Smith normal form $E_J$. This implies that the determinants of $C_JT_J$ and of $E_J$ coincide up to a sign. However since $p$ divides the determinant of $C_JT_J$, $p$ divides the determinant of $E_J$ and hence $e(J)$. Finally, by definition of $\rho_0$, this implies that $p$ divides $\rho_0$.
\end{proof}

\begin{Theorem}\label{theo:periodnongoodlucky} Let $p$ be a prime number. Then the following facts are equivalent
\begin{enumerate}
\item $p$ is non-good or not a $(\sigma,l)$-lucky prime number for $\A$.
\item $p$ divides $\rho_0$.
\end{enumerate}
\end{Theorem}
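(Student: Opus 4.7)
The implication $(1)\Rightarrow(2)$ is already established by Propositions~\ref{prop:nongoodtoperiod} and \ref{prop:nonlluckytoperiod}, so the entire task is the converse $(2)\Rightarrow(1)$. The plan is to start from some $J\subseteq[n]$, $1\le|J|\le l$, with $p\mid e(J)$ (which exists by the definition of $\rho_0$) and funnel the arithmetic information $p\mid e(J)$ into a concrete failure of $(\sigma,l)$-luckiness, unless $p$ has already failed to be good. The main tool is the well-known fact that, for any integer matrix $M$ with invariant factors $e_1\mid\cdots\mid e_r$, one has $\rk((M)_p) = \#\{i : p\nmid e_i\}$; since the $e_i$ form a divisibility chain, $p\mid e_r = e(J)$ forces $\rk((C_J)_p) < \rk(C_J) = r$.

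Step~1 is a shrinking step: pick $J'\subseteq J$ with $|J'|=r$ and $\rk(C_{J'})=r$ (a $\mathbb{Q}$-basis of the column span of $C_J$). Deleting columns cannot increase rank mod $p$, so $\rk((C_{J'})_p)\le\rk((C_J)_p)<r$, hence $p\mid e(J')$. The case $r=1$ is ruled out by the standing assumption that no prime divides any $\alpha_i$, so $r\ge 2$. Step~2 is a growing step: since $\A$ is essential, $\rk(C)=l$, so by matroidal column exchange I can extend $J'$ to $J''\supseteq J'$ with $|J''|=l$ and $\rk(C_{J''})=l$, i.e.\ $J''\in\mathfrak{I}(\A)$. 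Adding the $l-r$ new columns can raise the mod-$p$ rank by at most $l-r$, giving
$$\rk((C_{J''})_p)\le (r-1)+(l-r)=l-1<l,$$
so $p\mid e(J'')$.

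Step~3 converts $p\mid e(J'')$ into the divisibility of a leading coefficient in a minimal strong $\sigma$-Gr\"obner basis of $I_\mathbb{Z}=\ideal{\alpha_j : j\in J''}_\mathbb{Z}$; this is essentially running the argument of Proposition~\ref{prop:nonlluckytoperiod} in the opposite direction. Choose a unimodular $T$ with $C_{J''}T$ lower triangular of diagonal $d_1,\dots,d_l$; the columns of $C_{J''}T$ then form a minimal strong $\sigma$-Gr\"obner basis of $I_\mathbb{Z}$ whose leading coefficients are precisely the $d_i$. Since $T$ and the Smith transformations are unimodular, $d_1\cdots d_l=\det(C_{J''}T)=\pm\det(C_{J''})=\pm e_{J'',1}\cdots e_{J'',l}$, and this product is divisible by $p$ because $p\mid e(J'')=e_{J'',l}$. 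Hence $p\mid d_i$ for some $i$, so $p$ is not $\sigma$-lucky for $I_\mathbb{Z}$; as $J''\in\mathfrak{I}(\A)$, this means $p$ is not $(\sigma,l)$-lucky. The only point that needs genuine care is the claim that the lower-triangular form of $C_{J''}$ yields a minimal strong $\sigma$-Gr\"obner basis of $I_\mathbb{Z}$; this is exactly the fact already used (in the forward direction) inside Proposition~\ref{prop:nonlluckytoperiod}, and the same construction applies verbatim here, so no new work is required beyond invoking it.
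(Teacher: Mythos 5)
Your proof of $(2)\Rightarrow(1)$ is correct but takes a genuinely different route from the paper's. The paper argues by contradiction at a higher level: if $p$ were good and $(\sigma,l)$-lucky, Theorem~\ref{theo:maintheosamecomb} would give $\A\backsim\A_p$, while $p\mid e(J)$ forces $\rk((C_J)_p)<\rk(C_J)$, hence a drop in $\dim(H_{i_1}\cap\cdots\cap H_{i_k})$ after reduction, contradicting combinatorial equivalence. You instead give a direct, constructive argument entirely inside the linear-algebra framework of Section~7: you shrink $J$ to an independent set $J'$, extend it to a basis $J''\in\mathfrak{I}(\A)$ using essentiality, track the mod-$p$ rank deficiency through both operations to get $p\mid e(J'')$, and then run the triangularization argument of Proposition~\ref{prop:nonlluckytoperiod} backwards (via $\det(C_{J''}T)=\pm\prod_i e_{J'',i}$) to exhibit a leading coefficient divisible by $p$. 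The paper's route is shorter because Theorem~\ref{theo:maintheosamecomb} is already available; yours is self-contained, bypasses the Gr\"obner-basis machinery of Section~4, and actually yields the sharper conclusion that every prime dividing $\rho_0$ already fails to be $(\sigma,l)$-lucky, so the ``non-good'' alternative in (1) is subsumed rather than needed as a separate case (your preamble suggests a case split on goodness that your steps never actually use). The one debt you carry is the claim that the columns of the lower-triangular $C_{J''}T$ form a minimal strong $\sigma$-Gr\"obner basis with leading coefficients the diagonal entries; since the paper asserts and uses exactly this inside Proposition~\ref{prop:nonlluckytoperiod}, invoking it here is legitimate and introduces no new gap.
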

\begin{proof} By Propositions \ref{prop:nongoodtoperiod} and \ref{prop:nonlluckytoperiod}, (1) implies (2).

On the other hand, assume there exists $p$ a prime number that divides the period $\rho_0$, but $p$ is good and $(\sigma,l)$-lucky for $\A$.
This implies that there exists $J=\{i_1,\dots,i_k\}\subseteq[n]$ such that $e(J)$ is divisible by $p$. In particular, since the Smith normal form of $(C_J)_p$, 
up to transforming the elements on the main diagonal to $1$, is the reduction modulo $p$ of the Smith normal form of $C_J$,
this implies that $\rk(C_J)>\rk((C_J)_p)$ and hence that $\dim(H_{i_1}\cap\cdots\cap H_{i_k})>\dim((H_{i_1})_p\cap\cdots\cap (H_{i_k})_p)$. 
However, this implies that $\A$ and $\A_p$ are not combinatorially equivalent, contradicting Theorem~\ref{theo:maintheosamecomb}.
\end{proof}

\begin{Corollary} If $\rho_0$ is a square free integer, then it is the product of all prime numbers that are non-good or not $(\sigma,l)$-lucky for $\A$.
\end{Corollary}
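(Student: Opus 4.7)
The plan is to reduce the statement directly to Theorem~\ref{theo:periodnongoodlucky}, which already identifies the prime divisors of $\rho_0$ as exactly the primes that are either non-good or not $(\sigma,l)$-lucky for $\A$. Once that identification is in hand, square-freeness lets us recover $\rho_0$ itself (not merely its radical) as the product of those primes.

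More precisely, I would first observe that $\rho_0$ is a positive integer: by construction it is the $\lcm$ of the positive integers $e(J)$ for $\emptyset\neq J\subseteq[n]$ with $|J|\le l$, and since $\A$ is essential at least one such $e(J)$ is at least $1$. Then, because $\rho_0$ is square-free by hypothesis, the fundamental theorem of arithmetic gives
\[
\rho_0 \;=\; \prod_{p\,\mid\,\rho_0} p,
\]
where the product ranges over the distinct prime divisors of $\rho_0$. Applying Theorem~\ref{theo:periodnongoodlucky}, the set $\{p \text{ prime} : p\mid\rho_0\}$ coincides with the set of primes that are non-good or not $(\sigma,l)$-lucky for $\A$, so the product above is precisely the product over this latter set.

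I do not expect a real obstacle here: the work has already been done in Theorem~\ref{theo:periodnongoodlucky}, and the role of the square-free hypothesis is just to exclude the possibility that some prime divisor of $\rho_0$ appears with exponent greater than one. Without square-freeness, the argument would only yield that the radical of $\rho_0$ equals the product in question, which motivates stating the corollary in this restricted form.
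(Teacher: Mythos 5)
Your argument is correct and is exactly the intended one: the paper states this corollary as an immediate consequence of Theorem~\ref{theo:periodnongoodlucky}, whose content is precisely that the prime divisors of $\rho_0$ are the non-good or non-$(\sigma,l)$-lucky primes, with square-freeness then identifying $\rho_0$ with the product of those primes.
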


In general, $\rho_0$ is not a square free integer.

\begin{Example} Consider $\A$ the arrangement in $\mathbb{Q}^3$ with defining polynomial $Q(\A)=z(4x+z)(2x+y)(6x+y+3z)(8x+2y+5z)$.
In this situation, $p=2$ is the only non-good prime number for $\A$. Moreover, all prime numbers $p\ne2$ are $(\sigma,l)$-lucky for $\A$.
A direct computation shows that $\rho_0=16$.
\end{Example}

Putting together Theorems \ref{theo:maintheosamecomb} and \ref{theo:periodnongoodlucky}, we obtain the following result that generalizes
Corollary~\ref{cor:teraoisolattice}.


\begin{Corollary}Let $\A$ be a central and essential arrangement in $\mathbb{Q}^l$. The following facts are equivalent
\begin{enumerate}
\item $p$ is coprime with $\rho_0$. 
\item $\A\backsim\A_p$, i.e. $\A$ and $\A_p$ are combinatorially equivalent.
\end{enumerate}
\end{Corollary}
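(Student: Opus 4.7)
The plan is to derive this corollary as an immediate consequence of the two main results already established in the paper, namely Theorem~\ref{theo:maintheosamecomb}, which characterizes combinatorial equivalence $\A\backsim\A_p$ in terms of $p$ being a \emph{good} and \emph{$(\sigma,l)$-lucky} prime for $\A$, and Theorem~\ref{theo:periodnongoodlucky}, which characterizes the primes that are non-good or not $(\sigma,l)$-lucky for $\A$ as exactly those dividing $\rho_0$.

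First I would make the trivial observation that, since $p$ is prime, the arithmetic condition ``$p$ is coprime with $\rho_0$'' is synonymous with ``$p$ does not divide $\rho_0$''. By contraposing Theorem~\ref{theo:periodnongoodlucky}, the latter condition is equivalent to the conjunction that $p$ is good for $\A$ and $p$ is $(\sigma,l)$-lucky for $\A$.

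Next I would invoke Theorem~\ref{theo:maintheosamecomb}, which states precisely that this conjunction is equivalent to $\A\backsim\A_p$. Chaining the two biconditionals produces the desired equivalence between conditions (1) and (2).

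There is no real obstacle to overcome, since the substantive content has been established in Sections 4 and 7; the corollary is essentially a restatement of Theorem~\ref{theo:periodnongoodlucky} after applying Theorem~\ref{theo:maintheosamecomb}. The only point worth stressing is that this reformulation is genuinely stronger than Corollary~\ref{cor:teraoisolattice}, because it removes the bound $p>q_0$ entirely: the combinatorial equivalence $\A\backsim\A_p$ is controlled \emph{exactly} by the prime divisors of $\rho_0$, with no auxiliary hypothesis on the size of $p$.
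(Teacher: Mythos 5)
Your proposal is correct and matches the paper's argument exactly: the paper obtains this corollary precisely by ``putting together'' Theorem~\ref{theo:maintheosamecomb} and Theorem~\ref{theo:periodnongoodlucky}, i.e.\ by noting that $p$ coprime with $\rho_0$ means $p\nmid\rho_0$, which by the contrapositive of Theorem~\ref{theo:periodnongoodlucky} is equivalent to $p$ being good and $(\sigma,l)$-lucky, and hence by Theorem~\ref{theo:maintheosamecomb} to $\A\backsim\A_p$. Nothing is missing.
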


\bigskip
\paragraph{\textbf{Acknowledgements}} The authors would like to thank M. Yoshinaga for many helpful discussions. During the preparation of this article the second author was supported by JSPS Grant-in-Aid for Early-Career Scientists (19K14493). 

\bibliographystyle{plain}

\end{document}